\numberwithin{equation}{section}
\newcommand{\Z}{{\mathbb{Z}}}
\def\GL{\mathop{\hbox{GL}}}
\def\soc{\mathop{\hbox{Soc}}}
\def\mt3{\mathop{\hbox{MT3}}}
\def\Z{\mathbb Z}
\def\Sh{\mathop{\hbox{Sh}}}
\def\II{\mathbf{II}}
\def\I{\mathbf{I}}
\newtheorem{theorem}{Theorem}[section]
\newtheorem{corollary}[theorem]{Corollary}
\newtheorem{definition}[theorem]{Definition}
\newtheorem{lemma}[theorem]{Lemma}
\newtheorem{proposition}[theorem]{Proposition}
\newtheorem{remark}[theorem]{Remark}
\title{Atlas of Leavitt Path Algebras of small graphs}
\author[P. Alberca]{P. Alberca Bjerregaard}
\address{P. Alberca Bjerregaard and D. Mart\'{\i}n Barquero: Departamento de Matem\'atica Aplicada,
Universidad de M\'alaga, 29071
M\'alaga, Spain.}
\email{pgalberca@uma.es, dmartin@uma.es}
\author[G. Aranda]{G. Aranda Pino}
\address{G. Aranda Pino, C. Mart\'\i n Gonz\'alez and M. Siles Molina: Departamento de \'Algebra,
Geometr\'\i a y
Topolog\'\i a, Universidad de M\'alaga, 29071 M{\'a}laga, Spain} \email{g.aranda@uma.es,
candido@apncs.cie.uma.es, msilesm@uma.es}
\author[D. Mart\'{\i}n]{D. Mart\'{\i}n Barquero}
\author[C. Mart\'{\i}n]{C. Mart\'{\i}n Gonz\'alez}
\author[M. Siles]{M. Siles Molina}
\thanks{The authors have been supported by the Spanish
MEC and Fondos FEDER through project MTM2007-60333, jointly by the Junta de Andaluc\'{\i}a and Fondos FEDER
through projects FQM-336, FQM-2467 and FQM-3737 and by the Spanish Ministry of Education and Science under
project ``Ingenio Mathematica (i-math)'' No. CSD2006-00032 (Consolider-Ingenio 2010).}
\subjclass[2000]{Primary 16D70} \keywords{Leavitt path algebra, graph C*-algebra, classification, atlas,
finite order graph}
\begin{document}

\maketitle
\begin{abstract}
The aim of this work is the description of the isomorphism classes of all Leavitt path algebras coming from
graphs satisfying Condition (Sing) with up to three vertices. In particular, this classification recovers the
one achieved by Abrams et al. \cite{AALP} in the case of graphs whose Leavitt path algebras are purely
infinite simple. The description of the isomorphism classes is given in terms of a series of invariants
including the $\bf{K}_0$ group, the socle, the number of loops with no exits and the number of hereditary and
saturated subsets of the graph.

\end{abstract}

\section*{Introduction}

For a graph $E$ and field $K$, the Leavitt path algebras $L_K(E)$ can be regarded as both a broad
generalization of the algebras constructed by W. G. Leavitt in \cite{Leavitt, Le} to produce rings that do not
satisfy the IBN property, and as the algebraic siblings of the graph C*-algebras $C^*(E)$ \cite{BPRS, R},
which in turn are the analytic counterpart and descendant from the algebras investigated by J. Cuntz in
\cite{Cuntz, CK}.

\medskip

The first appearance of $L_K(E)$ took place in the papers \cite{AA1} and \cite{AMP}, in the context of
row-finite graphs (countable graphs such that every vertex emits only a finite number of edges). Although
their history is very recent, a flurry of activity has followed since the beginning of the theory, in several
different directions: characterization of algebraic properties of $L_K(E)$ in terms of graph properties of $E$
(see for instance \cite{AA1, AA2, AAPS, APS}); study of the modules over $L_K(E)$ in \cite{AB, AMMS1} among
others; computation of various substructures such as the Jacobson radical, the center, the socle and the
singular ideal in \cite{AA3, AC, AMMS1, S} respectively; investigation of the relationship and connections
with $C^*(E)$ and general C*-algebras \cite{AbramsTomforde, AMP, AGPS, APS2}; generalization to countable but
not necessarily row-finite graphs in \cite{AA3, AMMS2, Tomforde}, and then for completely arbitrary graphs in
\cite{AR, ARS, ArRS, G}; K-Theory \cite{AB, ABC, AMP}; and classification programs \cite{AALP, ALPS}.

\medskip

This last line of research is the main concern of this paper.
 Concretely, we classify Leavitt path algebras of
graphs of up to three vertices without parallel edges or, in a more standard terminology, graphs
satisfying Condition (Sing). Given the particular nature of our task, we employ a taxonomic modus operandi 
which some people would associate with biology rather than mathematics. Thus, in order to achieve our goal, we will
apply several known invariants for Leavitt path algebras (i.e., properties or structures that are preserved by
\emph{ring} isomorphisms between Leavitt path algebras) as well as find and prove some other completely new,
thus contributing as a byproduct to finding further characterizations and relations of algebraic properties of
$L_K(E)$ with graph-theoretic properties of $E$.

\medskip

In particular, our classification allows to recover the result that Abrams et al. \cite[Proposition 4.2]{AALP}
in which they showed that the information on the ${\bf K}_0$ groups and unit $[1_{L_K(E)}]$ is enough to
classify purely infinite simple unital Leavitt path algebras. We completely remove the condition of being
``purely infinite simple" and find a set of invariants (now including more that merely the basic ${\bf
K}$-theory data) that can distinguish any two Leavitt path algebras of the graphs within our scope, building
in this way the ``atlas of Leavitt path algebras of small graphs".

\medskip

The reason why both \cite[Proposition 4.2]{AALP} and our results in this article (Theorems \ref{casethree} and
\ref{caselessthanthree}) focus on the family $\{E\ |\ E \text{ has Condition (Sing) and }|E^0|\leq 3\}$ are
natural: on the one hand it was proved in \cite[Proposition 3.4]{AALP} that every purely infinite simple
Leavitt path algebra is isomorphic to some other having an underlying graph that satisfies Condition (Sing)
(actually this result can be carried over for not necessarily purely infinite Leavitt path algebras if we
forget about some conditions that are not needed for our purposes, such as the cardinals of the sets of
edges). Thus, in order to classify \emph{all} the Leavitt path algebras, it is enough to classify those
generated by graphs satisfying Condition (Sing).

\medskip

Moreover, in the enterprise of completing an atlas for Leavitt path algebras, the Condition (Sing) is
compulsory, because as soon as we allow arbitrary parallel edges in our graphs, we obtain infinite families of
non-isomorphic Leavitt path algebras. Indeed, for any $n\in {\mathbb N}$ the graph

\medskip

\vskip-0.7cm
$$\xymatrix{{\bullet}^{v} \ar@/_0.5pc/ [r] \ar@{.>} [r] \ar@/^0.5pc/[r]^{n)} & {\bullet}^{w} }$$
\vskip-0.1cm

\medskip

\noindent
is such that $L_K(E_n)\cong {\mathcal M}_n(K)$ and hence $\{L_K(E_n)\ |\ n\in {\mathbb N}\}$ is an infinite
family of mutually non-isomorphic Leavitt path algebras of graphs of order two.

\medskip

In the current state of the art concerning the classification of Leavitt path algebras, the condition that $|E^0|\leq 3$ is necessary.
If we think of the case $n=4$, for which there would be $3044$ graphs to be studied, even though the classification would still be tractable 
from a computational point of view (because of the ``not so large" size), the difficulty arises because it is not clear which collection of invariants will be fine enough  to get this desired classification. To enlighten this statement, we refer the reader to \cite{CMM}, where a first approach to this problem is tackled and where the authors explain which are the difficulties to get the classification in the case $n=4$. Note that they restrict their attention to those Leavitt path algebras which are simple.

\medskip

The way to proceed will be to use a matrix approach based on adjacency matrices (graphs satisfying Condition
(Sing) have binary adjacency matrices, that is, matrices with entries in the set $\{0,1\}$).  The abundance of
properties of $L_K(E)$ which can be investigated directly in the graph $E$ (or equivalently in its adjacency
matrix) together with the fact that matrices can be handled with computational techniques, imply that matrix
methods can be successfully exploited in the classification of Leavitt path algebras.

\medskip

One of the drawbacks of the adjacency matrix approach is that different matrices can represent the same graph
(up to relabeling of vertices): if a matrix $B$ can be obtained from a matrix $A$ by a series of
(simultaneous) permutations of rows and columns, then $A$ and $B$ represent isomorphic graphs, so first we
have the problem of classifying orbits of the action of the symmetric group $S_n$ on the set of binary
$n\times n$ matrices.

\medskip

Once this has been done, further computational tools are applied to eliminate matrices which agree after a
shift process (it is known \cite[Theorem 3.11]{ALPS} that shift graphs produce isomorphic Leavitt path
algebras). Thus, after taking one representative of each orbit (under the action of $S_n$) and eliminating
coincident matrices (up to shift process), we get a restricted list of matrices that represent the graphs of
the Leavitt path algebras that must be classified.

In order to do that, we set up a list of invariants. Some of them are well-known, such as the ${\bf K}_0$
groups, the socle, the units $[1_{L_K(E)}]$, etc.; and some of them have been found, proved, and tailored here
specifically for our purposes, such as the the number of hereditary and saturated subsets of vertices, the
number of isolated loops, the quotient modulo the only nontrivial hereditary and saturated subset (when this
is the case), etc.

\medskip

In any case, even graph-theoretic data in the table also have an algebraic nature: ILN characterizes the
number of ideals generated by idempotents that are isomorphic to $K[x,x^{-1}]$, HS is the number of ideals
generated by idempotents and MT3+L characterizes primitivity. The reason to include these graph-theoretic
invariants in the tables rather than their algebraic equivalents, is because the first ones are easily
recognized and computed for any given graph.

\medskip

For all the computations we have implemented and used pieces of \emph{Magma} and \emph{Mathematica} codes,
which we list in the Appendix. Specifically, and for optimization reasons, the computation of the invariants
has been performed by the \emph{Mathematica} software, whereas for the calculation of the orbits and shift
graphs the \emph{Magma} software has been used instead, as it proved to be faster and more efficient for these
purposes. The reading of these codes can be of interest in order to learn how the $K_0$ group is computed as
well as the process by which some redundant graphs (i.e., those that already belong to some existing orbit and
also those that appear as shifts of some other, as explained before) have been eliminated and do not appear in
the tables.

\medskip

\section {Definitions}

In this section we collect various notions concerning graphs, after which we define Leavitt path algebras.

\medskip

A (\emph{directed}) \emph{graph} $E=(E^0,E^1,r,s)$ consists of two sets $E^0$ and $E^1$ together with maps
$r,s:E^1 \to E^0$. The elements of $E^0$ are called \emph{vertices} and the elements of $E^1$ \emph{edges}.
For $e\in E^1$, the vertices $s(e)$ and $r(e)$ are called the \emph{source} and \emph{range} of $e$,
respectively, and $e$ is said to be an \emph{edge from $s(e)$ to $r(e)$}. If $s^{-1}(v)$ is a finite set for
every $v\in E^0$, then the graph is called \emph{row-finite}.

\medskip

If $E^0$ is finite and $E$ is row-finite, then $E^1$ must necessarily be finite as well; in this case we say
simply that $E$ is \emph{finite}. Even though many of the results of the paper hold for not necessarily
finite or row-finite graphs, we will assume that our graphs are finite, unless otherwise noted. By
\emph{order} of a finite graph $E$ we will understand the cardinal of $E^0$. In what follows, for any set
$X$, we will denote the cardinal of $X$ by $\vert X\vert$.

\medskip

A vertex which emits no edges is called a \emph{sink}. A \emph{path} $\mu$ in a graph $E$ is a finite sequence
of edges $\mu=e_1\dots e_n$ such that $r(e_i)=s(e_{i+1})$ for $i=1,\dots,n-1$. In this case, $s(\mu)=s(e_1)$
and $r(\mu)=r(e_n)$ are the \emph{source} and \emph{range} of $\mu$, respectively, and $n$ is the
\emph{length} of $\mu$, denoted by $l(\mu)$. We view the elements of $E^{0}$ as paths of length
$0$. Define ${\rm{Path}}(E)$ to be the set of all paths.

\medskip

If $\mu$ is a path in $E$, and if $v=s(\mu)=r(\mu)$, then $\mu$ is called a \emph{closed path based at $v$}.
If $s(\mu)=r(\mu)$ and $s(e_i)\neq s(e_j)$ for every $i\neq j$, then $\mu$ is called a \emph{cycle}. A graph
which contains no cycles is called \emph{acyclic}.

\medskip

An edge $e$ is an {\it exit} for a path $\mu = e_1 \dots e_n$ if there exists $i$ such that $s(e)=s(e_i)$ and
$e\neq e_i$. We say that $E$ satisfies \emph{Condition} (L) if every cycle in $E$ has an exit.

\medskip

We define a relation $\ge$ on $E^0$ by setting $v\ge w$ if there exists a path in $E$ from $v$ to $w$. In this
situation we say that $v$ \emph{connects} to $w$. A subset $H$ of $E^0$ is called \emph{hereditary} if $v\ge
w$ and $v\in H$ imply $w\in H$. A hereditary set is \emph{saturated} if every regular vertex which feeds into
$H$ and only into $H$ is again in $H$, that is, if $s^{-1}(v)\neq \emptyset$ is finite and
$r(s^{-1}(v))\subseteq H$ imply $v\in H$. Denote by $\mathcal{H}_E$ the set of hereditary saturated subsets of
$E^0$.

\medskip

The set $T(v)=\{w\in E^0\mid v\ge w\}$ is the \emph{tree} of $v$, and it is the smallest hereditary subset of
$E^0$  containing $v$. We extend this definition for an arbitrary set $X\subseteq E^0$ by $T(X)=\bigcup_{x\in
X} T(x)$. The \emph{hereditary saturated closure} of a set $X$ is defined as the smallest hereditary and
saturated subset of $E^0$ containing $X$. It is shown in \cite{AMP, BHRS} that the hereditary saturated
closure of a set $X$ is $\overline{X}=\bigcup_{n=0}^\infty \Lambda_n(X)$, where

\medskip

\begin{enumerate}

\item[] $\Lambda_0(X)=T(X)$, and
\item[] $\Lambda_n(X)=\{y\in E^0\mid
s^{-1}(y)\neq \emptyset$ and $r(s^{-1}(y))\subseteq \Lambda_{n-1}(X)\}\cup \Lambda_{n-1}(X)$, for $n\ge 1$.
\end{enumerate}

\medskip

Let $K$ be an arbitrary field and $E$ be a row-finite graph. The {\em Leavitt path $K$-algebra} $L_K(E)$ is
defined to be the $K$-algebra generated by the set $E^0\cup E^1\cup \{e^*\mid e\in E^1\}$ with the following
relations:

\medskip

\begin{enumerate}
\item[\rm{(V)}] $vw= \delta_{v,w}v$ for all $v,w\in E^0$.
\item[\rm{(E1)}]  $s(e)e=er(e)=e$ for all $e\in E^1$.
\item[\rm{(E2)}]  $r(e)e^*=e^*s(e)=e^*$ for all $e\in E^1$.
\item[\rm{(CK1)}]  $e^*f=\delta _{e,f}r(e)$ for all $e,f\in E^1$.
\item[\rm{(CK2)}]  $v=\sum _{e\in s^{-1}(v)}ee^*$ for every $v\in E^0$ that is not a sink.
\end{enumerate}

\medskip

Relation (V) is related to vertices, (E1) and (E2) refer to edges, while the names Cuntz and Krieger give rise
to the letters which comprise the notation (CK1) and (CK2) (notation which is now standard in both the
algebraic and the analytic literature).

\medskip

The elements of $E^1$ are called \emph{real edges}, while for $e\in E^1$ we call $e^\ast$ a \emph{ghost edge}.
The set $\{e^*\mid e\in E^1\}$ will be denoted by $(E^1)^*$.  We let $r(e^*)$ denote $s(e)$, and we let
$s(e^*)$ denote $r(e)$.  If $\mu = e_1 \dots e_n$ is a path in $E$, { we write $\mu^*$ for} the element $e_n^*
\dots e_1^*$ of $L_{K}(E)$. For any subset $H$ of $E^0$, we will denote by $I(H)$ the ideal of $L_{K}(E)$
generated by $H$. Note that if $E$ is a finite graph, then $L_{K}(E)$ is unital with $\sum _{v\in E^0}
v=1_{L_{K}(E)}$; otherwise, $L_{K}(E)$ is a ring with a set of local units consisting of sums of distinct
vertices.

\medskip

The Leavitt path algebra $L_{K}(E)$ is a $\mathbb{Z}$-graded $K$-algebra, spanned as a $K$-vector space by
$\{pq^{\ast } \ \vert \ p,q \in {\rm{Path}}(E)\}$. (Recall that the elements of $E^{0} $ are viewed
as paths of length $0$, so that this set includes elements of the form $v$ with $v\in E^{0}$.) In particular,
for each $n\in\mathbb{Z}$, the degree $n$ component $L_{K}(E)_{n}$ is spanned by
$\{pq^{\ast }\ \vert \ p,q \in {\rm{Path}}(E), \  l(p)-l(q)=n\}$.

\medskip

For a hereditary subset $H$ of $E^0$, the \emph{quotient graph} $E/H$ is defined as
$$(E^0\setminus H, \{e\in E^1|\ r(e)\not\in H\}, r|_{(E/H)^1}, s|_{(E/H)^1}),$$
\noindent
and \cite[Lemma 2.3 (1)]{APS}
shows that if $H$ is hereditary and saturated, then $L_K(E)/I(H)\cong L_K(E/H)$, isomorphism of ${\mathbb
Z}$-graded $K$-algebras.

\medskip

Given a graph $E$, the \emph{adjacency matrix} is the matrix $A_E=(a_{ij}) \in {\mathbb Z}^{(E^0\times E^0)}$,
given by $a_{ij} = \vert\{\text{edges from }i\text{ to }j\}\vert$.

\medskip

Even though Leavitt path algebras are ${\mathbb Z}$-graded $K$-algebras with involution $*$, all our
homomorphisms and isomorphism will be \emph{ring} morphisms (not necessarily graded morphisms, or algebra
morphisms, or $*$-morphisms). In particular when we say that a property (P) is an \emph{invariant} for Leavitt
path algebras we mean that if a graph $E$ satisfies (P) and there exists a \emph{ring} isomorphism
$f:L_K(E)\to L_K(F)$, then $F$ necessarily satisfies (P). For more on the subtleties regarding the differences
and connections between ring, algebra, and *-algebra isomorphisms between $L_K(E)$ and $L_K(F)$, we refer the
reader to \cite{AbramsTomforde}.

\medskip

\section{Matrix techniques}
A useful way to work with finite order graphs is to consider their adjacency matrices. Consider for instance
the graphs

\medskip

$$\xymatrix{ {\bullet}^2 & {\bullet}^1 \ar[r]\ar[l] & {\bullet}^3 & \text{  } &
{\bullet}^1 & {\bullet}^2 \ar[r]\ar[l] & {\bullet}^3}$$

\medskip

\noindent whose adjacency matrices are $\tiny\begin{pmatrix}0 & 1 & 1\cr 0 & 0 & 0 \cr 0 & 0 & 0\end{pmatrix}$
and $\tiny\begin{pmatrix}0 & 0 & 0\cr 1 & 0 & 1 \cr 0 & 0 & 0\end{pmatrix}$, respectively. The two graphs are
essentially the same (i.e., they are isomorphic graphs) although the matrices are different. It is easy to
prove that when we permute two vertices in a graph, the corresponding adjacency matrices are related by a
composition of permutations of rows and columns (so they are similar matrices). In the example above the
second matrix is obtained by permuting rows and columns $1$ and $2$ of the first matrix.

\medskip

If we have a graph $E$ with vertices labeled $\{1,2,\ldots, n\}$ and permute labels $i$ and $j$ we get a new
graph $E'$. Then, denoting by $M$ and $M'$ the corresponding adjacency matrices we may relate them as follows:
consider the $n\times n$ integer matrix $e_{ij}$ with all entries $0$ except for the $(i,j)$ one which is $1$.
Consider also, for $i\ne j$, the matrix $I_{ij}:=1-e_{ii}-e_{jj}+e_{ij}+e_{ji}$, that is, the identity matrix
with rows $i$ and $j$ permuted. We have $I_{ij}^2=1$ so that $I_{ij}\in\hbox{GL}_n (\mathbb Z)$. As it is well
known, for any matrix $M$ the new matrix $M'=I_{ij}MI_{ij}$ agrees with $M$ except for the fact that  rows and
columns $i$ and $j$ of $M$ are permuted in the new matrix.

\medskip

Since $E$ and $E'$ are isomorphic graphs, the matrices $M$ and $M'$ represent the same graph. In other words,
the problem of classifying graphs (up to isomorphism) of a given order is equivalent to that of studying the
orbits of the subgroup $\langle I_{ij}\colon i\ne j\rangle\leq \hbox{GL}_n(\mathbb Z)$ on $\mathcal
M_n(\mathbb Z)$ by the usual conjugation action.

\medskip

On the other hand it is easy to check that the map $  \langle I_{ij}\colon i\ne j\rangle\to S_n$ given by
$I_{ij}\mapsto (ij)$ is a group isomorphism from our group of matrices to the symmetric group of permutations
of $\{1,\ldots,n\}$, where $(ij)$ denotes the permutation of elements $i$ and $j$.

\medskip

In other words, we are concerned with the problem of studying the action of the symmetric group $S_n$ on the
set of binary $n\times n$ matrices, that is, on the set $\mathcal M_n(\Z_2)$ which has cardinal $2^{n^2}$. To
obtain some additional information on the complexity of this problem we recall some basic results on actions
of finite groups $G$ on finite sets $X$. These are given by maps $G\times X\to X$ in which the action of $g\in
G$ on $x\in X$ is denoted by $g x$. Let us denote by $X/G$ the set of orbits of $X$ under the action of the
group $G$. Then, as it is well known,

\medskip

\begin{equation}\label{noo}
\tag{\ensuremath{\dagger}} \vert X/G\vert=\frac{1}{\vert G\vert}\sum_{g\in G} \vert X_g\vert, \quad \text{where $X_g:=\{x\in X\colon g x=x\}$.}
\end{equation}

\medskip

\begin{proposition}\label{paluego}
Denote by $\Phi_n$ the number of non-isomorphic graphs of order $n$ which satisfy Condition {\rm (Sing)}. Then
$\Phi_1=2$, $\Phi_2=10$, $\Phi_3=104$ and $\Phi_4=3044$.
\end{proposition}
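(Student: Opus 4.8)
### Proof proposal

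The plan is to apply Burnside's lemma, formula~\eqref{noo}, with $G = S_n$ acting on $X = \mathcal{M}_n(\Z_2)$ by simultaneous permutation of rows and columns (conjugation by permutation matrices). The key point is to compute $|X_\sigma|$, the number of binary $n \times n$ matrices fixed by $\sigma \in S_n$, in terms of the cycle structure of $\sigma$. First I would observe that $\sigma$ acts on the set of matrix positions $E^0 \times E^0$ (which has $n^2$ elements) by $\sigma \cdot (i,j) = (\sigma(i), \sigma(j))$, and a matrix is fixed by $\sigma$ precisely when it is constant on each orbit of this induced action. Hence $|X_\sigma| = 2^{c(\sigma)}$, where $c(\sigma)$ is the number of orbits of $\sigma$ on $E^0 \times E^0$.

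The next step is to express $c(\sigma)$ through the cycle type of $\sigma$. If $\sigma$ has cycles of lengths $\lambda_1, \dots, \lambda_k$ (so $\sum \lambda_m = n$), then an orbit of $\sigma$ on $E^0 \times E^0$ that involves a position from cycle $m$ in the first coordinate and cycle $m'$ in the second has size $\mathrm{lcm}(\lambda_m, \lambda_{m'})$; the cycle-on-pairs from the product of a $\lambda_m$-cycle and a $\lambda_{m'}$-cycle decomposes into $\gcd(\lambda_m, \lambda_{m'})$ orbits. Therefore
$$
c(\sigma) = \sum_{m=1}^{k}\sum_{m'=1}^{k} \gcd(\lambda_m, \lambda_{m'}).
$$
Plugging into~\eqref{noo} and summing over conjugacy classes of $S_n$ (each class of cycle type $\lambda$ having the standard size $n!/z_\lambda$, where $z_\lambda$ is the order of the centralizer), one gets a closed finite sum for $\Phi_n$. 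This is the general formula; the rest is arithmetic.

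The main obstacle — really the only labor — is the explicit evaluation for $n = 1, 2, 3, 4$. For $n=1$: the single orbit gives $2^1 = 2$, so $\Phi_1 = 2$. For $n=2$: the identity contributes $2^4$, the transposition has $c = 2\cdot 1 + 1 + 1 = 2$ orbits on the $4$ positions wait — recompute via the formula with $\lambda = (2)$: $c = \gcd(2,2) = 2$, contributing $2^2$; so $\Phi_2 = \tfrac{1}{2}(16 + 4) = 10$. For $n=3$: identity $\to 2^9$; three transpositions, each of type $(2,1)$ with $c = \gcd(2,2)+\gcd(2,1)+\gcd(1,2)+\gcd(1,1) = 2+1+1+1 = 5$, contributing $3 \cdot 2^5$; two $3$-cycles, type $(3)$ with $c = \gcd(3,3) = 3$, contributing $2 \cdot 2^3$; so $\Phi_3 = \tfrac{1}{6}(512 + 96 + 16) = \tfrac{624}{6} = 104$. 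For $n=4$ one runs through the five cycle types of $S_4$ — $(1^4), (2,1^2), (2^2), (3,1), (4)$ with class sizes $1, 6, 3, 8, 6$ and orbit counts $16, 11, 8, 6, 4$ respectively — obtaining $\Phi_4 = \tfrac{1}{24}\bigl(2^{16} + 6\cdot 2^{11} + 3\cdot 2^{8} + 8\cdot 2^{6} + 6\cdot 2^{4}\bigr) = \tfrac{1}{24}(65536 + 12288 + 768 + 512 + 96) = \tfrac{73056}{24} = 3044$. I would present the $n=4$ case either by this hand computation or, as the paper does elsewhere, by citing the \emph{Magma} routine in the Appendix that enumerates the orbits directly; the two agree, which is a useful sanity check.
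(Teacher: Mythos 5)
Your overall strategy is the same as the paper's: apply Burnside's lemma \eqref{noo} to the conjugation action of $S_n$ on $\mathcal M_n(\Z_2)$ and sum $\vert X_\sigma\vert$ over conjugacy classes. Where the paper identifies $X_\sigma$ by explicitly writing down the shape of the matrices fixed by one representative of each class, you instead derive the general formula $\vert X_\sigma\vert = 2^{c(\sigma)}$ with $c(\sigma)=\sum_{m,m'}\gcd(\lambda_m,\lambda_{m'})$; that formula is correct and is a cleaner, more scalable way to organize the same computation.

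However, your $n=4$ evaluation contains a genuine numerical error. For cycle type $(2,1^2)$ your own formula gives $c=\gcd(2,2)+2\gcd(2,1)+2\gcd(1,2)+4\gcd(1,1)=2+2+2+4=10$, not $11$; the paper's explicit fixed-matrix shape for $(12)$ has exactly the ten free parameters $a,b,c,d,e,f,x,y,u,v$, confirming $\vert X_{(12)}\vert=2^{10}$. With your stated summand $6\cdot 2^{11}=12288$, the total $65536+12288+768+512+96$ equals $79200$, not the $73056$ you assert, and $79200/24=3300\neq 3044$; so as written the $n=4$ case does not establish the claim. The correct summand is $6\cdot 2^{10}=6144$, which gives $73056/24=3044$. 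The cases $n=1,2,3$ are correct and agree with the paper.
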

\begin{proof} The case $n=1$ is trivial. For the case $n=2$ we need to calculate the number of orbits of
$S_2=\{1,(12)\}$ on the set $X=\mathcal M_2(\Z_2)$. In this case $X_1=X$ so that $\vert X_1\vert=2^4$ while
$X_{(12)}$ is the set of matrices  of the form {\tiny$\begin{pmatrix}a & b\cr b & a\end{pmatrix}$}, which is a
$\Z_2$-vector space of dimension $2$ hence has cardinal $\vert X_{(12)}\vert=2^2$. Therefore the number of
non-isomorphic graphs of order $2$ is $\vert X/S_2\vert=\frac{1}{2}(2^4+2^2)=10$.

\medskip

Let us consider $n=3$ now. We have that $\Phi_3=\vert \mathcal M_3(\Z_2)/S_3\vert$ so we must investigate the
summands $X_g$ in formula (\ref{noo}), for $g\in S_3$. It is worth to realize that in the formula (\ref{noo})
we have $\vert X_g\vert=\vert X_h\vert$ if $g$ and $h$ are conjugated. Since
$S_3=\{1,(12),(13),(23),(123),(132)\}$ and the conjugacy classes in $S_3$ are $\{1\}$, $\{(12),(13),(23)\}$
and  $\{(123),(132)\}$, we have $\Phi_3=\frac{1}{6}( \vert X_1\vert+3\vert X_{(12)}\vert+2\vert
X_{(123)}\vert)$. On the other hand the matrices fixed by $(12)$ are those of the form
$\begin{pmatrix}
a & b & c\cr
b & a & c\cr
d & d & e
 \end{pmatrix}$
with $a,b,c,d,e\in {\mathbb Z}_2$. These constitute a vector space $X_{(12)}$ of dimension $5$, hence $\vert
X_{(12)}\vert=2^5$. The matrices fixed by $(123)$ are those of the form
$\begin{pmatrix}
a & b & c\cr
c & a & b\cr
b & c & a
 \end{pmatrix}$
with $a,b,c\in {\mathbb Z}_2$. In this case the vector space $X_{(123)}$ has dimension $3$ and therefore
$\vert X_{(123)}\vert=2^3$. Thus
$\Phi_3=\frac{1}{3}(2^9+3\cdot 2^5+2\cdot 2^3)=\frac{512+96+16}{6}=104.$

The computations for $S_4$ and $X=\mathcal M_4(\Z_2)$ are as follows:
there are five conjugacy classes on $S_4$ which are
\begin{itemize}
\item $\{1\}$
\item $\{(12), (13), (14), (23), (24), (34)\}$,
\item $\{(123), (132), (124), (142), (134), (143), (234), (243)\}$,
\item $\{(12)(34), (13)(24), (14)(23)\}$,
\item $\{(1234), (1243), (1324), (1342), (1423), (1432)\}$.
\end{itemize}

\medskip

Therefore $\Phi_4=\frac{1}{24}(X_1+
6 X_{(12)}+8 X_{(123)}+3 X_{(12)(34)}+6 X_{(1234)})$. Then $\vert X_1\vert=\vert X\vert=2^{16}$.
Moreover $X_{(12)}$, $X_{(123)}$, $X_{(12)(34}$ and $X_{(1234)}$ are (respectively) the sets of matrices:

\medskip

{\small
$$\begin{pmatrix}a & b & c & d\cr b & a & c & d\cr e & e & x & y\cr f & f & u &v\end{pmatrix},
\begin{pmatrix}\lambda & a & b & z\cr b & \lambda & a & z\cr a & b & \lambda & z\cr t & t & t &\mu\end{pmatrix},
\begin{pmatrix}a & b & x & y\cr b & a & y & x\cr x' & y'& c & d\cr y' & x' & d &c\end{pmatrix},
\begin{pmatrix}\lambda & \mu & \gamma & \delta\cr \delta & \lambda & \mu & \gamma\cr \gamma & \delta &
\lambda & \mu\cr \mu & \gamma & \delta &\lambda\end{pmatrix},
$$}

\medskip

\noindent
where the parameters are all in ${\mathbb Z}_2$. Thus
$\vert X_{(12)}\vert=2^{10}, \vert X_{(123)}\vert=2^6, \vert X_{(12)(34)}\vert=2^8, \vert X_{(1234)}
\vert=2^4,$
and finally
$\Phi_4=\frac{1}{24}(2^{16}+6\cdot 2^{10}+8\cdot 2^6+3\cdot 2^8+6\cdot 2^4)=3044.$
\end{proof}

\medskip

The proposition above gives an idea of the super exponential growth of the number of non-isomorphic graphs of a
given order $n$ satisfying Condition (Sing). In this paper, we will deal only with the cases $n=1,2,3$ as only
those seems to be really tractable as far as atlases are concerned.

\medskip


\medskip

\section{Graphs of order one and two}

In this section we will classify the Leavitt path algebras of graphs with one and two vertices satisfying
Condition (Sing). The order one graphs satisfying Condition (Sing) offer no difficulty; they are collected in
the following table (it is well known that their associated Leavitt path algebras are $K$ and $K[x,x^{-1}]$).

\medskip

\begin{center}
\begin{tabular}{|c||c|}
\hline
$E$ &  $L_K(E)$     \\
\hline
&   \\
$\ \  \tiny\xymatrix{ {\bullet}  }   \quad \quad \quad \quad \I_1 \ \  $ & $K$ \\
&   \\
$\ \  \tiny\xymatrix{{\bullet} \ar@(dr,ur)}  \quad \quad  \quad \quad \I_2 \ \  $ & $K[x,x^{-1}]$  \\
&  \\
\hline
\end{tabular}
\smallskip

{\small Table 1: Case $n=1$.}
\end{center}

\medskip

The disconnected order two graphs satisfying Condition (Sing) are:

\medskip

$$\tiny\xymatrix{ \bullet & \bullet &  &  & \bullet & \bullet \ar@(dr,ur) & & & \bullet \ar@(dr,ur) &\bullet
\ar@(dr,ur)}$$  \vskip-0.3cm  $$\xymatrix{ \ \ \ \ \ \ \ \ \  \I_1\times\I_1 &   &  & \ \ \I_1\times \I_2   &
& &\ \ \I_2\times\I_2 & }$$


The Leavitt path algebras associated to these three graphs are non-isomorphic since their socles ($K^2$, $K$
and $0$, respectively) are mutually non-isomorphic. Actually, $L_K(\I_1\times\I_1)\cong K\oplus K$,
$L_K(\I_1\times\I_2)\cong K\oplus K[x,x^{-1}]$ and $L_K(\I_2\times\I_2)\cong K[x,x^{-1}]\oplus K[x,x^{-1}]$.


Now we describe the Leavitt path algebras associated to order two connected graphs which satisfy Condition
(Sing). To this end we must study the orbits of the set $S$ of $2\times 2$ matrices with entries in ${\mathbb
Z}_2$ under the action of the group $S_2$ of row and column permutation (generated by the matrix
$\tiny\begin{pmatrix}0 & 1\cr 1 & 0\end{pmatrix}$). Thus, ruling out the matrices which stand for disconnected
graphs, the representatives of the orbits of $S$ are


$$\left\{\tiny\begin{pmatrix}1 & 1\cr 1 & 1\end{pmatrix}, \tiny\begin{pmatrix}1 & 1\cr 1 & 0\end{pmatrix},
\tiny\begin{pmatrix}1 & 0\cr 1 & 1\end{pmatrix}, \tiny\begin{pmatrix}1 & 1\cr 0 & 0\end{pmatrix},
 \tiny\begin{pmatrix}1 & 0\cr 1 & 0\end{pmatrix}, \tiny\begin{pmatrix}0 & 1\cr 1 & 0\end{pmatrix},
 \tiny\begin{pmatrix}0 & 1\cr 0 & 0\end{pmatrix}\right\}.$$


The seven matrices above do correspond to non-isomorphic graphs. However, some of them have isomorphic Leavitt
path algebras as can be shown by using a shift graph construction. For completeness we include here the basics
of this construction and refer the reader to \cite{AALP} for more information.

Let $E$ be a row-finite graph, and let $v, w\in E^0$ be distinct vertices which are not sinks. If there exists
an injective map $\theta: s^{-1}(w)\to s^{-1}(v)$ such that $r(e) = r(\theta(e))$ for every $e\in s^{-1}(w)$,
we define the \emph{shift graph from} $v$ \emph{to} $w$, denoted $F=E(w\hookrightarrow v)$, as follows:


\begin{enumerate}[{\rm (1)}]
\item $F^0 = E^0$.
\item $F^1 = (E^1 \setminus \theta (s^{-1}(w))) \cup \{f_{v,w}\}$, where $f_{v,w} \not\in E^1, s(f_{v,w}) =
v$ and $r(f_{v,w}) = w$.
\end{enumerate}


The key result about shift graphs is \cite[Theorem 3.11]{ALPS}, which states that for any row-finite graph
$E$, any shift graph $F=E(w\hookrightarrow v)$ produces a Leavitt path algebra isomorphic to $L_K(E)$. In what
follows we will analyze the relationship between the adjacency matrices $M$ and $N$ associated to the graphs
$E$ and $F$, respectively, when we assume that both graphs are finite, of the same order, and satisfy
Condition (Sing).


Thus, $M=(m_{kl})$ and $N=(n_{kl})$ are $n\times n$-matrices with entries in ${\mathbb Z}_2$. For fixed
$i,j\in\{1,\ldots,n\}$, we have $N=\Sh_{ij}(M)$ (equivalently $F=E(i\hookrightarrow j))$ when:
\begin{enumerate}[{\rm (1)}]
\item $m_{kl}= n_{kl}$ for all $k\neq j$ and all $l$.
\item $n_{jk}=m_{jk}-m_{ik}+\delta_{ki}$ for all $k$ (here $\delta$ is the Kronecker delta).
\end{enumerate}


In our case we find that $\tiny\begin{pmatrix}1 & 1\cr 1 & 0\end{pmatrix}=\Sh_{12}\tiny\begin{pmatrix}1
& 1\cr 1 & 1\end{pmatrix}$ and $\tiny\begin{pmatrix}0 & 1\cr 1 & 0\end{pmatrix}=\Sh_{21}\tiny\begin{pmatrix}1
& 0\cr 1 & 0\end{pmatrix}$. Also, no other shift process allows us to identify any other two matrices.
Hence, after collecting one representative of each orbit and applying the shift testing (see the Appendix for
the \emph{Magma} codes), we get the following set of matrices:


$$\left\{\tiny\begin{pmatrix}1 & 1\cr 1 & 1\end{pmatrix}, \begin{pmatrix}1 & 0\cr 1 & 1\end{pmatrix},
\begin{pmatrix}1 & 1\cr 0 & 0\end{pmatrix}, \begin{pmatrix}1 & 0\cr 1 & 0\end{pmatrix}, \begin{pmatrix}0 &
1\cr 0 & 0\end{pmatrix}\right\}.$$


These matrices correspond to the graphs we will denote $\mathbf{II}_1,\dots, \mathbf{II}_5$, which are given
by:


$$
\tiny\xymatrix{\bullet \ar@(dl,ul) \ar@/^.5pc/[r]  & \bullet \ar@(dr,ur)\ar@/^.5pc/[l]}
\hskip 1.6cm
\xymatrix{\bullet \ar@(dl,ul)   & \bullet\ar@(dr,ur)\ar[l]}
\hskip 1.6cm
\xymatrix{\bullet \ar@(dl,ul)\ar[r]& \bullet}
\hskip 1cm
\xymatrix{\bullet \ar@(dl,ul)  & \bullet \ar[l]}
\hskip 0.6cm
\xymatrix{\bullet \ar[r]& \bullet}
$$


With this last reduction, we have found a complete irredundant family of graphs of order two satisfying
Condition (Sing), i.e.,  whose Leavitt path algebras are non-isomorphic. In order to show this we will use
several invariants, namely, the ${\bf K}_0$ group, the socle, and the cardinal of the set of hereditary and
saturated subsets of vertices. We proceed to describe each of them.


Recall that a sink in $E$ is a vertex $i\in E^0$ such that $s^{-1}(i)=\emptyset$, that is, $i$ does not emit
any edge. The set of sinks of $E$ will be denoted by ${\rm Sink}(E)$. With this terminology we can summarize
the results on the ${\bf K}$-theory of the Leavitt algebra $L_K(E)$, obtained in \cite{ABC}, as follows.


Following \cite{AB} write $N_E$ and $1$ for the matrices in ${\mathbb Z}^{(E^0\times E^0\setminus
\text{Sink}(E))}$ obtained from $A^t_E$ and from the identity matrix after removing the columns corresponding
to sinks. Then there is a long exact sequence ($n\in {\mathbb Z}$)


$$\dots \to {\bf K}_n(K)^{(E^0\setminus {\rm Sink}(E))} \overset{1-N_E}{\longrightarrow} {\bf K}_n(K)^{(E^0)}
\longrightarrow {\bf K}_n(L_K(E)) \longrightarrow {\bf K}_{n-1}(K)^{(E^0\setminus {\rm Sink}(E))}.$$


In particular ${\bf K}_0(L_K(E)) \cong \text{coker}(1-N_E: {\mathbb Z}^{(E^0\setminus \text{Sink}(E))} \to
{\mathbb Z}^{(E_0)})$.


For a semiprime ring $R$, the \emph{socle} is the sum of all minimal left ideals of $R$ (equivalently, the
sum of all minimal right ideals of $R$) and is defined to be zero if there are no minimal one-sided ideals.


In order to compute the socle we need several results first. It has been proved in \cite[Theorem 4.2]{AMMS1}
that the socle of a Leavitt path algebra $L_K(E)$ is the ideal generated by the so called line points. We
recall the definitions here: a vertex $v$ in $E^0$ is a \emph{bifurcation} (or that \emph{there is a
bifurcation at} $v$) if $s^{-1}(v)$ has at least two elements. A vertex $u$ in $E^0$ will be called a
\emph{line point} if there are neither bifurcations nor cycles at any vertex $w\in T(u)$. We will denote by
$P_l(E)$ the set of all line points in $E^0$.


Our task here is to adapt  \cite[Theorem 4.2]{AMMS1} to our context, concretely we are interested in finding a
computational way to effectively compute the socle in the case of finite graphs. In this situation, each line
point connects to a sink, so that the ideal generated by all the line points connected to the same sink is
just the ideal generated by the sink. Thus the socle is the ideal generated by the sinks of the graph.


Hence we must compute the ideal of $L_K(E)$ generated by a sink $u$. Denoting such ideal by
$(u):=L_K(E)uL_K(E)$, it is clear (see \cite[Lemma 3.1]{AAPS}) that it is generated by the elements
$\mu\tau^*$ where $\mu,\tau$ are paths such that $r(\mu)=r(\tau)=u$ (either $\mu$ or $\tau$ can be the trivial
path $u$). To give an easier description of this ideal define $P_u$ as the set of all paths with range $u$.
Define also for each $\mu,\tau\in P_u$ the elements
$e_{\mu,\tau}:=\mu\tau^*,\quad e_{\mu}:=e_{\mu,\mu}=\mu\mu^*.$


All
are in $(u)$ and, moreover, it is easy to check that $\{e_\mu\}_{\mu\in P_u}$ is a
\emph{connected} set of pairwise orthogonal idempotents, i.e., $e_\mu L_K(E) e_\tau\ne 0$ for each
$\mu,\tau\in P_u$, because $0\ne\mu\tau^*=e_\mu(\mu\tau^*)e_\tau\in e_\mu L_K(E)e_\tau$. Another useful
property is given in the following lemma.


\begin{lemma}\label{connected} Let $E$ be a finite graph. For any two paths $\mu$ and $\tau$ such that
$r(\mu)$ and $r(\tau)$ are sinks we have:
$$e_\mu L_K(E) e_\tau =\begin{cases}
Ke_{\mu,\tau} & \text{ if   }\ r(\mu)=r(\tau) \cr
0 & \textrm{  otherwise.}\end{cases}$$
\end{lemma}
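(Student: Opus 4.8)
The plan is to compute $e_\mu L_K(E) e_\tau$ directly, using the spanning set of $L_K(E)$ by monomials $pq^*$ together with the relations (V), (E1), (E2), (CK1), (CK2). Since $L_K(E)$ is spanned as a $K$-vector space by $\{pq^* \mid p,q \in \mathrm{Path}(E)\}$, every element of $e_\mu L_K(E) e_\tau$ is a $K$-linear combination of elements of the form $e_\mu (pq^*) e_\tau = \mu\mu^* p q^* \tau\tau^*$, so it suffices to understand each such product. First I would analyze $\mu^* p$: using (CK1) repeatedly, $\mu^* p$ is nonzero only if $\mu$ and $p$ are "comparable" as paths, i.e. one is an initial subpath of the other; and since $r(\mu)$ is a sink, $\mu$ emits nothing beyond its endpoint, which forces $p = \mu p'$ for some path $p'$ with $s(p') = r(\mu)$ — but again $r(\mu)$ a sink means $p'$ is trivial, so $\mu^* p = \delta_{\mu,p}\, r(\mu)$ (as long as $p$ has the same range-is-sink property; more precisely $\mu^* p \ne 0$ forces $p=\mu$). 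Symmetrically, $q^* \tau \ne 0$ forces $q = \tau$ and then $q^*\tau = r(\tau)$.

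Carrying this out: $e_\mu (pq^*) e_\tau = \mu (\mu^* p)(q^* \tau) \tau^* = \delta_{\mu,p}\delta_{q,\tau}\, \mu\, r(\mu)\, r(\tau)\, \tau^*$. Now $r(\mu)\, r(\tau) = \delta_{r(\mu),r(\tau)}\, r(\mu)$ by (V). Hence the product is $0$ unless $r(\mu) = r(\tau)$, in which case it equals $\mu r(\mu) \tau^* = \mu\tau^* = e_{\mu,\tau}$ (using (E1)/(E2) to absorb the vertex). Therefore every element of $e_\mu L_K(E) e_\tau$ is a scalar multiple of $e_{\mu,\tau}$ when $r(\mu)=r(\tau)$, and is $0$ otherwise. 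For the reverse inclusion in the equal-range case, note $e_{\mu,\tau} = \mu\tau^* = (\mu\mu^*)(\mu\tau^*)(\tau\tau^*) = e_\mu (\mu\tau^*) e_\tau \in e_\mu L_K(E) e_\tau$, and this element is nonzero (for instance because $e_{\mu,\tau} e_{\tau,\mu} = e_\mu \ne 0$, $e_\mu$ being a nonzero idempotent as the endpoint vertex $r(\mu)$ is nonzero in $L_K(E)$). This gives $e_\mu L_K(E) e_\tau = K e_{\mu,\tau}$, as claimed.

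The step I expect to require the most care is the "comparability" analysis of $\mu^* p$ via (CK1): one has to argue cleanly that $e_n^*\cdots e_1^* f_1\cdots f_m$ telescopes — each $e_1^* f_1 = \delta_{e_1,f_1} r(e_1)$, then the vertex is absorbed and one continues — and that the process cannot "run off the end" of $\mu$ because $r(\mu)$ is a sink and emits no edges, so $p$ cannot properly extend $\mu$, nor can $\mu$ properly extend $p$ for the same reason applied to $r(p)$ once we know $p$ is an initial segment of $\mu$. One could alternatively invoke the general normal-form / reduction results for monomials in $L_K(E)$ from the literature, but since the sink hypothesis makes the argument so short it is cleanest to do it by hand. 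I would also remark explicitly that finiteness of $E$ is not really essential here beyond guaranteeing paths are finite sequences; the genuine hypothesis doing the work is that $r(\mu), r(\tau)$ are sinks.
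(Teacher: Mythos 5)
Your overall strategy---reducing to monomials $e_\mu(pq^*)e_\tau$ and telescoping with (CK1) and the sink hypotheses---is the same as the paper's, but there is a genuine error at the key step. The claim that $\mu^*p\neq 0$ forces $p=\mu$ is false: the sink condition at $r(\mu)$ only rules out $p$ \emph{properly extending} $\mu$, not $\mu$ properly extending $p$. If $\mu=p\mu''$ with $\mu''$ nontrivial, then $\mu^*p=\mu''^*p^*p=\mu''^*\neq 0$; here $r(p)$ is an interior vertex of $\mu$ and emits the next edge of $\mu$, so ``the same reason applied to $r(p)$'' simply does not apply---nothing makes $r(p)$ a sink, because $p$ ranges over \emph{all} paths in the spanning set, not just those ending in sinks. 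Concretely, take edges $e\colon v\to w$ and $f\colon w\to u$ with $u$ a sink, and let $\mu=\tau=ef$, $p=q=e$: then $e_\mu(ee^*)e_\tau=e_{\mu,\tau}\neq 0$, whereas your formula $e_\mu(pq^*)e_\tau=\delta_{\mu,p}\delta_{q,\tau}\,\mu\,r(\mu)\,r(\tau)\,\tau^*$ predicts $0$.

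The statement of the lemma survives because these extra nonzero products still land in $Ke_{\mu,\tau}$, but your derivation does not establish that. The missing case is $\mu=p\mu''$ and $\tau=q\tau''$ with $\mu''$, $\tau''$ possibly nontrivial, where the product telescopes to $\mu\mu''^*\tau''\tau^*$; one must then show that $\mu''^*\tau''\neq 0$ forces $\mu''=\tau''$ (hence $r(\mu)=r(\tau)$, and the product equals $e_{\mu,\tau}$). This is where the sink hypotheses do their real work: if, say, $l(\mu'')>l(\tau'')$, then $\mu''=\tau''\sigma$ with $\sigma$ a nontrivial path starting at $s(\sigma)=r(\tau'')=r(\tau)$, which is a sink---a contradiction. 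This length comparison on the leftover tails is precisely the ``continuing in this way'' step of the paper's proof; your argument needs it and skips it.
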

\begin{proof} Assume that both $\mu$ and $\tau$ are nontrivial paths. Consider a
generator $\omega:=\mu \mu^*(fg^*)\tau\tau^*$ of $e_{\mu}L_K(E)e_{\tau}$ where $f, g \in E^1$. If $\omega$ is
nonzero then $\mu=f\mu'$, where $r(\mu')=r(\mu)=:u$ (which is a sink), so
$\omega=\mu\mu'^*f^*fg^*\tau\tau^*=\mu\mu'^*g^*\tau\tau^*$. On the other hand, $\tau=g\tau'$ for some path
$\tau'$ such that $r(\tau')=r(\tau)=:v$ (again a sink). Consequently $\omega= \mu\mu'^*g^*g\tau'\tau^* =
\mu\mu'^*\tau'\tau^*$.


Continuing in this way, we can keep on canceling out edges of the paths $\mu'^*$ and $\tau'$. If they have
distinct length, say $l(\mu'^*)>l(\tau')$ then $\mu'=\tau'\mu''$, with $\mu''$ a nontrivial path. But this is
impossible because $s(\mu'')=r(\tau')=r(\tau)$ is a sink. Then $l(\mu'^*)=l(\tau')$ so that
$\omega=\mu\mu'^*\tau'\tau^*=\mu\tau^*=e_{\mu,\tau}$ as needed. Finally, with obvious modifications, we can
prove it when either $\mu$ or $\tau$ are vertices.
\end{proof}


Recall that an idempotent $e$ in a ring $R$ is said to be a \emph{division idempotent} if $eRe$ is a division
ring.


\begin{lemma} Let $u$ be a sink of a finite graph $E$. Then $\{e_\mu\}_{\mu\in P_u}$ is a set of pairwise
orthogonal and connected division idempotents.
\end{lemma}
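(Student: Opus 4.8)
The plan is to derive this lemma directly from Lemma~\ref{connected}, together with the observations already recorded in the paragraph preceding it. That discussion establishes that $\{e_\mu\}_{\mu\in P_u}$ is a connected set of pairwise orthogonal idempotents, so the only point left to prove is that each $e_\mu$ is a \emph{division} idempotent, i.e.\ that the corner $e_\mu L_K(E)e_\mu$ is a division ring.

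First I would check that $e_\mu=\mu\mu^*$ is a \emph{nonzero} idempotent. It is idempotent because $\mu^*\mu=r(\mu)=u$ by (CK1), so $e_\mu^2=\mu(\mu^*\mu)\mu^*=\mu u\mu^*=\mu\mu^*=e_\mu$; and it is nonzero because $\mu^* e_\mu\mu=(\mu^*\mu)(\mu^*\mu)=u\cdot u=u\ne 0$, using that $u$, being a sink, is a genuine vertex of $E$ and hence a nonzero element of $L_K(E)$. (This is exactly where the hypothesis ``$u$ is a sink'' enters: it guarantees that $P_u$ consists of honest paths ending at a vertex.)

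Then, applying Lemma~\ref{connected} with $\tau=\mu$ — for which trivially $r(\mu)=r(\tau)$ — gives $e_\mu L_K(E)e_\mu=Ke_{\mu,\mu}=Ke_\mu$. This corner is a ring with identity element $e_\mu$, and the $K$-algebra structure of $L_K(E)$ yields a surjective ring homomorphism $K\to e_\mu L_K(E)e_\mu$, $\lambda\mapsto\lambda e_\mu$, which is injective since $e_\mu\ne 0$. Hence $e_\mu L_K(E)e_\mu\cong K$ as rings; as $K$ is a field, this is in particular a division ring, so $e_\mu$ is a division idempotent. Combined with the connectedness and pairwise orthogonality already noted, this establishes the statement.

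The only substantive input is Lemma~\ref{connected}, which has already been proved; granting it, the present lemma is essentially immediate, so I do not anticipate a real obstacle. The one subtlety worth flagging is the non-triviality of $e_\mu$ — needed because the zero ring is not a division ring — but, as shown above, this is an instant consequence of the defining relations.
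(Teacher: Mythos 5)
Your treatment of the division-idempotent claim is correct and matches the paper's: Lemma~\ref{connected} applied with $\tau=\mu$ gives $e_\mu L_K(E)e_\mu=Ke_\mu$, a one-dimensional corner with nonzero identity $e_\mu$, hence isomorphic to $K$ and in particular a division ring. Your explicit check that $e_\mu\neq 0$ (via $\mu^*e_\mu\mu=u$) is a worthwhile point that the paper leaves implicit. The connectedness claim is likewise fine, either from the computation $0\neq\mu\tau^*=e_\mu(\mu\tau^*)e_\tau$ in the paragraph preceding the lemma or directly from Lemma~\ref{connected}.

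The gap is pairwise orthogonality. You dismiss it as already recorded in the preceding discussion, but that paragraph merely \emph{asserts} it; the only justification given there (the ``because'' clause) concerns connectedness, and the lemma itself is where the paper actually proves orthogonality. The argument is not vacuous, and it is exactly where the sink hypothesis does its second piece of work: if $\mu\neq\tau$ in $P_u$ and $e_\mu e_\tau=\mu\mu^*\tau\tau^*\neq 0$, then repeated use of (CK1) forces one path to be an extension of the other, say $\tau=\mu\mu'$ with $\mu'$ nontrivial; but then $s(\mu')=r(\mu)=u$ is a sink, which emits no edges --- a contradiction. Without this (or an equivalent) argument, the ``pairwise orthogonal'' part of the statement is simply unproved. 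Adding those few lines closes the gap; everything else in your write-up stands.
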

\begin{proof} Suppose that the idempotents are not pairwise orthogonal. Then there exist two different paths
$\mu,\tau\in P_u$ such that $e_\mu e_\tau=\mu\mu^*\tau\tau^*\neq 0$. In this situation only two
things can happen: either $\tau=\mu \mu'$ for some path $\mu'$ or $\mu=\tau \tau'$ for some path $\tau'$.
Since $\mu\neq\tau$ by hypothesis, then $\mu'$ (respectively $\tau'$) is nontrivial, and this is
not possible since it must start at $s(\mu')=r(\mu),$ which is a sink (respectively, at $s(\tau')=r(\tau)$).


Any two idempotents $e_\mu$ and $e_\tau$ are connected by Lemma \ref{connected}, that is, $e_\mu L_K(E)
e_\tau=K e_{\mu,\tau}\ne 0$ and each $e_\mu$ is a division idempotent because $e_\mu L_K(E)e_\mu$ is
one-dimensional (apply Lemma \ref{connected}).
\end{proof}


Putting together all the information and the previous results above, we get the desired computer-friendly
description of the socle (see \cite{MM} for the implementations and explanations of the socle-related
\emph{Mathematica} code).


\begin{proposition} Let $E$ be a finite graph and $u_1,\ldots,u_n$ be the
sinks of $E$. Then $$\soc(L_K(E))\cong{\mathcal M}_{\vert P_{u_1}\vert}(K)\oplus\cdots\oplus
{\mathcal M}_{\vert
P_{u_n}\vert}(K),$$ where $\vert P_{u_i}\vert=\infty$ if $P_{u_i}$ contains paths with cycles.
\end{proposition}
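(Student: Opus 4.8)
The plan is to prove the statement by analysing, for each sink $u$ of $E$, the ideal $(u)=L_K(E)uL_K(E)$ separately and then reassembling. Recall from the discussion preceding the statement that $\soc(L_K(E))=\sum_{i=1}^{n}(u_i)$: by \cite[Theorem 4.2]{AMMS1} the socle is the ideal generated by the line points of $E$, and in a finite graph every line point connects to a sink and lies in the ideal generated by that sink, so $\soc(L_K(E))$ is the ideal generated by $\{u_1,\dots,u_n\}$. Thus it suffices to show that $(u)\cong{\mathcal M}_{\vert P_u\vert}(K)$ for every sink $u$, and that the sum $\sum_i(u_i)$ is direct.

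Fix a sink $u$. First I would record that $\{e_{\mu,\tau}\}_{\mu,\tau\in P_u}$ is a system of matrix units. Since every path in $P_u$ has range the sink $u$, no path in $P_u$ can be a proper initial segment of another one in $P_u$; hence $\tau^{*}\mu'=\delta_{\tau,\mu'}\,u$ for $\tau,\mu'\in P_u$, which gives $e_{\mu,\tau}e_{\mu',\tau'}=\mu\tau^{*}\mu'\tau'^{*}=\delta_{\tau,\mu'}\,e_{\mu,\tau'}$; and $e_{\mu,\tau}\neq 0$ (already noted before Lemma \ref{connected}). Next I would check that these matrix units span $(u)$: writing an arbitrary generator of $L_K(E)uL_K(E)$ as $pq^{*}u\,rs^{*}$ and using once more that $u$ is a sink, the product collapses to $0$ unless $q$ and $r$ are the trivial path $u$ and $r(p)=r(s)=u$, in which case it equals $ps^{*}=e_{p,s}$; hence $(u)=\sum_{\mu,\tau\in P_u}Ke_{\mu,\tau}$. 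Linear independence is immediate from Lemma \ref{connected}: given a relation $\sum c_{\mu,\tau}e_{\mu,\tau}=0$, multiply on the left by $e_{\mu_0}$ and on the right by $e_{\tau_0}$; by the matrix-unit rule this isolates $c_{\mu_0,\tau_0}e_{\mu_0,\tau_0}\in e_{\mu_0}L_K(E)e_{\tau_0}=Ke_{\mu_0,\tau_0}$, forcing $c_{\mu_0,\tau_0}=0$. Therefore sending $e_{\mu,\tau}$ to the standard matrix unit $E_{\mu,\tau}$ is a ring isomorphism $(u)\cong{\mathcal M}_{\vert P_u\vert}(K)$, where ${\mathcal M}_{\vert P_u\vert}(K)$ denotes the $K$-algebra of $P_u\times P_u$ matrices with only finitely many nonzero entries (all of ${\mathcal M}_{\vert P_u\vert}(K)$ when $P_u$ is finite).

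It then remains to see when $\vert P_u\vert$ is infinite. In a finite graph, $P_u$ is infinite precisely when it contains paths of arbitrarily large length, and by the pigeonhole principle this happens precisely when some path ending at $u$ repeats a vertex, i.e.\ passes through a closed path and hence, $E$ being finite, through a cycle; this is the meaning of ``$P_u$ contains paths with cycles'', and in that case ${\mathcal M}_{\vert P_u\vert}(K)$ is the algebra of finitary ${\mathbb N}\times{\mathbb N}$ matrices over $K$.

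Finally, for the direct sum over all sinks: if $u_i\neq u_j$ and $\mu\in P_{u_i}$, $\tau\in P_{u_j}$, then neither path is an initial segment of the other (they end at different sinks), so $\mu^{*}\tau=0$ and hence $e_{\mu}L_K(E)e_{\tau}=0$ by Lemma \ref{connected}; in particular $(u_i)(u_j)=0$. The same left/right multiplication argument as above shows that the whole family $\{e_{\mu,\tau}:1\le i\le n,\ \mu,\tau\in P_{u_i}\}$ is $K$-linearly independent, and it spans $\sum_i(u_i)=\soc(L_K(E))$; hence the sum is direct and $\soc(L_K(E))=\bigoplus_{i=1}^{n}(u_i)\cong\bigoplus_{i=1}^{n}{\mathcal M}_{\vert P_{u_i}\vert}(K)$. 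The only ingredient here that is not routine bookkeeping with the matrix units already produced by the two preceding lemmas is the combinatorial description of the infinite-index case; that finiteness/pigeonhole step is the one I would write out most carefully.
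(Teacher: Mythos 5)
Your proposal is correct and follows essentially the same route as the paper, which states this proposition without a separate proof precisely because it is meant to be assembled from the preceding ingredients: the reduction of the socle to the ideal generated by the sinks, Lemma \ref{connected}, and the lemma on pairwise orthogonal connected division idempotents. You have simply written out the matrix-unit bookkeeping, the orthogonality of ideals attached to distinct sinks, and the pigeonhole characterization of the infinite case, all of which the paper leaves implicit.
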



The final result we will introduce in this section concerns the hereditary and saturated subsets of graphs
whose Leavitt path algebras are isomorphic.


\begin{proposition}\label{hs} Let $E$ and $F$ be row-finite graphs and let $\varphi: L_K(E)\to L_K(F)$ be a
ring isomorphism (not necessarily graded). Then:
\begin{enumerate}[{\rm (i)}]
\item If $I$ is a graded ideal of $L_K(E)$, then $\varphi(I)$ is a graded ideal of $L_K(F)$.
\item $|{\mathcal H}_E|=|{\mathcal H}_F|$.
\end{enumerate}
\end{proposition}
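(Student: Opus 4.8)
The plan is to reduce everything to the lattice structure of graded ideals. The key external input is the classical result (from \cite{BHRS}, or \cite{APS} in the algebraic setting) that for a row-finite graph $E$ the map $H\mapsto I(H)$ is a lattice isomorphism between $\mathcal{H}_E$ and the lattice $\mathcal{L}_{gr}(L_K(E))$ of graded (two-sided) ideals of $L_K(E)$. Granting this, part (ii) is immediate from part (i): a ring isomorphism $\varphi$ carries the lattice of ideals of $L_K(E)$ bijectively onto that of $L_K(F)$, and by (i) it restricts to a bijection between the graded ideals on each side; composing with the two lattice isomorphisms $\mathcal{H}_E\cong\mathcal{L}_{gr}(L_K(E))$ and $\mathcal{L}_{gr}(L_K(F))\cong\mathcal{H}_F$ yields a bijection $\mathcal{H}_E\to\mathcal{H}_F$, so $|\mathcal{H}_E|=|\mathcal{H}_F|$. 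Thus the whole content of the proposition is part (i): that \emph{gradedness of an ideal is detected ring-theoretically}, even though $\varphi$ need not respect the $\mathbb{Z}$-grading.

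For part (i) the strategy is to give an intrinsic, grading-free characterization of graded ideals. The natural candidate: an ideal $I$ of a Leavitt path algebra $L_K(E)$ is graded if and only if $I$ is generated by idempotents, equivalently if and only if $I$ is generated by the set of vertices it contains, equivalently $I = I(I\cap E^0)$. This is a known fact for row-finite graphs (graded ideals are exactly the idempotent-generated ideals, since each such ideal has the form $I(H)$ with $H = I\cap E^0$ hereditary and saturated, and $I(H)$ has local units given by sums of vertices of $H$). Since "being generated by idempotents" is manifestly preserved by any ring isomorphism, it will follow that $\varphi(I)$ is graded whenever $I$ is. So the real work is: (a) recall/verify that graded $\Leftrightarrow$ idempotent-generated for row-finite graphs, and (b) observe that the right-hand property transports across $\varphi$.

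For step (a) the argument I would write runs in two directions. If $I$ is graded, then $I\cap E^0 =: H$ is hereditary (from relation (E1): if $v\in H$ and $e\in s^{-1}(v)$ then $e = ve \in I$ so $r(e) = e^*e \in I$) and saturated (from (CK2): if $s^{-1}(v)$ is finite nonempty with $r(s^{-1}(v))\subseteq H$, then $v = \sum_{e\in s^{-1}(v)} ee^* \in I$); moreover $I = I(H)$, because the graded structure forces a homogeneous element of $I$ to be a sum of terms $pq^*$ lying in $I$, and using the degree-zero reductions $q^*q$, $p^*p$ together with $(CK1)$ one pushes such a term down to a vertex of $I$ — this is exactly \cite[Lemma 2.3]{APS} or the cited structure theory, and I would simply invoke it. Conversely, any ideal of the form $I(H)$ with $H\in\mathcal{H}_E$ is graded, since it is spanned by the homogeneous elements $\{pq^*: r(p)=r(q)\in H\}$. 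Hence $I$ graded $\iff I=I(I\cap E^0)$, a purely ring-theoretic condition. The main obstacle — and the point the authors presumably want to stress — is precisely this equivalence "graded $\iff$ generated by idempotents," because it is the only place where one must actually use the internal structure of $L_K(E)$; once it is in hand, the transport of the property under an arbitrary ring isomorphism, and hence the equality of cardinalities in (ii), is formal. I would therefore devote the bulk of the written proof to establishing (or carefully citing) that equivalence, and dispatch (ii) in a sentence.
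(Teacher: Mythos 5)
Your proposal follows essentially the same route as the paper: part (i) is reduced to the characterization ``graded ideal $\iff$ generated by idempotents'' (citing \cite[Proposition 5.2 and Theorem 5.3]{AMP}, which is exactly what the paper invokes), after which transport under a ring isomorphism is formal, and part (ii) follows from (i) together with the lattice isomorphism ${\mathcal H}_E\cong{\mathcal L}_{gr}(L_K(E))$. Your extra sketch of why $H=I\cap E^0$ is hereditary and saturated and why $I(H)$ is graded is a correct expansion of the cited equivalence, so the argument is sound and matches the paper's.
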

\begin{proof}
(i). An ideal $I$ in $L_K(E)$ is a graded ideal if and only if it is generated by idempotents; in fact
$I=I(H)$, where $H=I\cap E^0\in {\mathcal H}_E$ (see the proofs of \cite[Proposition 5.2 and Theorem
5.3]{AMP}). Since ring isomorphisms preserve idempotents, the ideal $\varphi(I)$ is generated by idempotents
too, and hence it is graded.


(ii). By \cite[Theorem 5.3]{AMP} there exists a lattice isomorphism between ${\mathcal H}_E$ and ${\mathcal
L}_{gr}(L_K(E))$ (the lattice of graded ideals of $L_K(E)$). Now (i) implies the result.
\end{proof}


\begin{definition}\label{HS}{\rm
We define ${\rm HS}_E$ (or HS when the graph is known) to be the number $|{\mathcal H}_E|-2$. By Proposition
\ref{hs}, it is an invariant for Leavitt path algebras.}
\end{definition}


The way to proceed in order to classify the Leavitt path algebras coming from order two graphs will be to
first arrange the Leavitt path algebras according to their ${\bf K}_0$ groups and socles. Only two graphs
agree on this data. For those, we compute $HS$ in order to distinguish their Leavitt path algebras. We collect
this information in Table 2.


Further, we have included an explicit algebraic description of $L_K(E)$ when this algebra is known; when it is not known
we have included the symbol ``$-$": the
eighth algebra is $L(1,2)$ as can be shown by doing an out-split to the rose of $2$-petals (see for instance
\cite[Definition 2.6 and Theorem 2.8]{AALP}); the  fifth algebra is the algebraic Toeplitz algebra ${\mathcal
T}$ (several representations of this algebra have been given: as an algebra defined in terms of generators
and relations in \cite{J}; via endomorphisms of an infinite dimensional vector space in \cite{Ger}, and as a
Leavitt path algebra in \cite{S}; actually an explicit isomorphism between the Leavitt path algebra
representation and the description given by Jacobson appears in \cite[Examples 4.3]{AB}); the isomorphism for
the fourth one can be found in \cite[Corollary 3.4]{AAS2}; the rest is folklore (see for example \cite{AA1}).

\bigskip

\begin{center}
\begin{tabular}{|c|c|c|c||c|}
\hline
$E$  & ${\bf K}_0$ & $\soc $ & HS & $L_K(E)$  \\
\hline
& & & & \\
$\tiny\xymatrix{ \bullet & \bullet}
$  & $\Z^2$ &  $K^2$ &  & $K^2$ \\
& & & & \\
$\tiny\xymatrix{\bullet \ar[r]& \bullet}
$ & $\Z$ & $\mathcal M_2(K)$  & &  $\mathcal M_2(K)$ \\
& & & & \\
$\tiny\xymatrix{\bullet \ar@(dl,ul) & \bullet }    $  & $\Z^2$ &  $K$  &  & $K \oplus K[x,x^{-1}]$ \\
& & & & \\
$\tiny\xymatrix{\bullet \ar@(dl,ul)  & \bullet \ar[l]}    $  & $\Z$ & $0$  & $0$  &
 $\mathcal M_2(K[x,x^{-1}])$\\
& & & & \\
$\tiny\xymatrix{\bullet \ar@(dl,ul)\ar[r]& \bullet}
$  & $\Z$ & $\mathcal M_\infty(K)$   &  & $\mathcal T$\\
& & & & \\
$\tiny\xymatrix{\bullet \ar@(dl,ul) &\bullet \ar@(dr,ur)}
$  & $\Z^2$ & $0$  &  & $K[x,x^{-1}]^2$ \\
& & & & \\
$\tiny\xymatrix{\bullet \ar@(dl,ul)   & \bullet\ar@(dr,ur)\ar[l]}
$ & $\Z$ &  $0$ & $1$ & ---  \\
& & & & \\
$\hskip 0.6cm\tiny\xymatrix{\bullet \ar@(dl,ul) \ar@/^.5pc/[r]  & \bullet \ar@(dr,ur)\ar@/^.5pc/[l]}
\hskip 0.6cm
$ & $0$ & $0$  & & $L(1,2)$ \\
& & & & \\
\hline
\end{tabular}

\smallskip

{\small Table 2: Case $n=2$.}
\end{center}


We collect all the information above in the next theorem.

\begin{theorem}\label{casetwo} There exist exactly $8$ mutually non-isomorphic Leavitt path algebras
in the family $\mathcal L_2=\{L_K(E)\ |\ E\, $
satisfies Condition {\rm (Sing)} and
$ |E^0|=2 $ and a set
of graphs whose Leavitt path algebras are those in $\mathcal L_2$ is given in Table $2$. A complete system of
invariants for $\mathcal L_2$ consists of the triple {\rm(${\bf K}_0$, $\soc$, {\rm HS})}. Concretely, two
Leavitt path algebras in $\mathcal L_2$, $L_K(E)$ and $L_K(F)$, are isomorphic as rings if and only if the
data of the previous invariants for $E$ and $F$ coincide.
\end{theorem}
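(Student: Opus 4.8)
The plan is to combine the reduction already carried out in this section with explicit computations of the three invariants on each of the eight candidate graphs. First I would recall that the analysis of the $S_2$-orbits together with the shift-graph elimination has reduced the connected case to the five graphs $\mathbf{II}_1,\dots,\mathbf{II}_5$, while the disconnected case yields $\mathbf{I}_1\times\mathbf{I}_1$, $\mathbf{I}_1\times\mathbf{I}_2$ and $\mathbf{I}_2\times\mathbf{I}_2$. Since a graph isomorphism (equivalently, a simultaneous permutation of the rows and columns of the adjacency matrix) and the shift construction \cite[Theorem 3.11]{ALPS} both produce isomorphic Leavitt path algebras, every member of $\mathcal L_2$ is ring-isomorphic to $L_K(E)$ for one of these eight graphs $E$. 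It therefore suffices to prove that the eight corresponding algebras are pairwise non-isomorphic and, moreover, that they are already separated by the triple $({\bf K}_0,\soc,\mathrm{HS})$.

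Next I would complete Table 2 by computing each invariant. For ${\bf K}_0$ I would use the isomorphism ${\bf K}_0(L_K(E))\cong\text{coker}(1-N_E)$ recalled above: writing down each binary adjacency matrix, transposing it, deleting the columns indexed by sinks, and computing the cokernel of $1-N_E$ over $\Z$ is an elementary Smith-normal-form calculation in each $2\times 2$ (or smaller) instance, which yields $\Z^2$ for three of the graphs (where the relevant map has trivial image), $\Z$ for four of them (where the image is a primitive rank-one sublattice), and $0$ for $\mathbf{II}_1$ (where $1-N_E$ is unimodular). For $\soc$ I would invoke the Proposition on the socle of a finite graph: a sink-free graph has zero socle, and for each of the four graphs possessing a sink $u$ I would count $|P_u|$, using that $P_u$ is infinite exactly when some path terminating at $u$ runs through a cycle; this gives $\soc\cong K^2,\ \mathcal M_2(K),\ K$ and $\mathcal M_\infty(K)$ respectively, and $0$ for the four sink-free graphs. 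For $\mathrm{HS}$ I only have to separate the unique pair agreeing on $({\bf K}_0,\soc)$, namely $\mathbf{II}_2$ and $\mathbf{II}_4$, both with $({\bf K}_0,\soc)=(\Z,0)$: listing the hereditary saturated subsets directly, I obtain $\mathcal H_E=\{\emptyset,E^0\}$ for $\mathbf{II}_4$ (the singleton formed by the looped vertex is hereditary but fails to be saturated, since the other vertex feeds only into it) and $\mathcal H_E=\{\emptyset,\{v\},E^0\}$ for $\mathbf{II}_2$ (that singleton is now saturated, the remaining vertex carrying a loop and hence not feeding only into it), whence $\mathrm{HS}=0$ and $\mathrm{HS}=1$, respectively.

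With Table 2 filled in, the eight resulting triples are pairwise distinct (note that $K^2$, $\mathcal M_2(K)$, $K$, $\mathcal M_\infty(K)$ and $0$ are pairwise non-isomorphic as rings). Since ${\bf K}_0$ and $\soc$ are preserved by arbitrary ring isomorphisms and $\mathrm{HS}$ is a ring-isomorphism invariant by Proposition \ref{hs} and Definition \ref{HS}, no two of the eight algebras are isomorphic; this shows there are exactly eight isomorphism classes in $\mathcal L_2$ and that Table 2 exhibits a set of representatives. For the final equivalence: if $L_K(E)\cong L_K(F)$ with $E,F$ in the family, then the triples for $E$ and $F$ coincide because all three entries are invariants; conversely, each of $L_K(E)$ and $L_K(F)$ is isomorphic to the Leavitt path algebra of one of the eight tabulated graphs, and under such an isomorphism it carries the same invariant triple as that graph, so if $E$ and $F$ have the same triple they both correspond to the same tabulated graph, and therefore $L_K(E)\cong L_K(F)$.

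The single genuinely delicate step is the third invariant: ${\bf K}_0$ and $\soc$ together do not separate $\mathbf{II}_2$ from $\mathbf{II}_4$, so the whole classification hinges on the $\mathrm{HS}$ computation and on Proposition \ref{hs}, which is precisely what guarantees that $|\mathcal H_E|$ is genuinely preserved by ring (not merely graded) isomorphisms. Everything else is a finite, mechanical verification, performed explicitly by the \emph{Mathematica} and \emph{Magma} routines described in the introduction.
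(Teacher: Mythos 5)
Your proposal is correct and follows essentially the same route as the paper, which leaves the proof of this theorem implicit in the preceding orbit/shift reduction to the eight graphs and the invariant computations recorded in Table 2; your explicit calculations of ${\bf K}_0$ via $\mathrm{coker}(1-N_E)$, of $\soc$ via the sinks, and of $\mathrm{HS}$ for the one ambiguous pair (the fourth and seventh graphs of Table 2) all agree with the table. The only step the paper makes explicit that you also correctly identify as the crux is Proposition \ref{hs}, which ensures $\mathrm{HS}$ is preserved by arbitrary ring isomorphisms.
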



\section{Graphs of order three}

Now we investigate the Leavitt path algebras associated to graphs of three vertices satisfying Condition
(Sing). Their adjacency matrices are the elements of $\mathcal M_3({\mathbb Z}_2)$. There are $2^9=512$ such
matrices but, as in the previous section, we must consider the orbits of this set under the action of de
subgroup of $\GL_3(\Z_2)$ generated by the matrices $I_{12}$, $I_{13}$, $I_{23}$. This subgroup is isomorphic
to $S_3$ and so it defines an action by conjugation on the set of binary matrices $\mathcal M_3({\mathbb
Z}_2)$. If we let the group $S_3$ act on the set of $512$ matrices we find the representatives of the orbits,
which form a set ${\mathcal P}$ of $104$ matrices, by Proposition \ref{paluego}.

\medskip

We explain below the procedure that has been used to generate the list containing the $104$ matrices
representing the graphs we are interested in (for the \emph{Magma} code see the Appendix). We create the
matrix algebra $\mathcal M_3({\mathbb Z}_2)$ of order three matrices over the field of two elements. Then $S3$
is the \emph{Magma} name for the symmetric group $S_3$ of permutations of three elements and $X$ is the
underlying set of $\mathcal M_3({\mathbb Z}_2)$.

\medskip

The function $p2m$ carries out the standard isomorphism which passes from a permutation of $S_3$ to a $3\times
3$ matrix as indicated at the beginning of Section $2$. The list $gen$ contains the generators of $S_3$ in
matrix form and then $S3m$ is the subgroup of $\GL_3(\Z_2)$ isomorphic to $S_3$. The function $f\colon X\times
S_3\to X$ gives the standard action of $S_3$ on $X$. Thus, we define $M$ as the $S_3$-set given by the action
$f$. Finally, $O$ is the set of orbits of $M$ under the action of $S_3$ and ``reducedlist" is $O$ transformed
in a list of elements.

\medskip

In the set ${\mathcal P}$ containing the representatives of the orbits of $\mathcal M_3({\mathbb Z}_2) $, we
define the relation $\sim$ such that: $m \sim n$ if and only if $n\equiv\Sh_{i,j}(m)$ or $
m\equiv\Sh_{i,j}(n)$ for some $i,j\in\{1,2,3\}$ (we use the notation $\equiv$ to indicate that the two
matrices are in the same orbit under the action of $S_3$).

\medskip

Thus, for each matrix in $p\in {\mathcal P}$, we compare it with all the other matrices $q\in {\mathcal P}$
and remove $q$ from ${\mathcal P}$ in case $p \sim q$. In this way we obtain a smaller set ${\mathcal Q}
\subseteq {\mathcal P}$ whose cardinal is $52$ and with the property that no two elements in ${\mathcal Q}$
are related via $\sim$. So the algebras that we must study are the Leavitt path algebras of the graphs
represented by these $52$ matrices.

\medskip

Our final task will be to find out all the graphs corresponding to non-isomorphic Leavitt path algebras that
arise from order 3 graphs. To this end, we arrange in different tables the Leavitt path algebras according to
their ${\bf K}_0$ groups and socles (if they are zero or not). Then, for each of these tables we compute, in a
systematic way, several invariants that will allow us to distinguish the Leavitt path algebras that are
different. For those which are indistinguishable, we actually provide ring isomorphisms between them.

\medskip

The tables are arranged as follows. In the first column we include the graphs that we have obtained after
choosing one representative of every orbit and after removing the shift graphs. The graphs have been ordered,
for an easier location, first by number of edges and then by number of disjoint cycles (that is, cycles which
do not share common edges).

\medskip

Only for the tables corresponding to nonzero socle do we include the computation of the socles and the
quotients $L_K(E)/Soc(L_K(E))$ (that we will denote by $Soc$ and $L/Soc$, respectively). The next columns will
contain, only when the information is both needed and useful (in the sense that they provide some
discrimination between at least two graphs), some other invariants that we proceed to describe here.

First we will compute the element $[1_{L_K(E)}]$ of ${\bf K}_0(L_K(E))$, which we know (see \cite{ALPS}) is
represented by $(1,1,\dots, 1)^t + \text{im}(I-N_E)$ in $\text{coker}(I-N_E)$.


The next invariant, provided by Corollary \ref{numberloops}, will allow us to discriminate the graphs that
contain a different number of isolated loops. The key point will be to give a ring-theoretic property for
Leavitt path algebras that contain isolated loops (Proposition \ref{inspired}), which can be regarded as an
analogue of a result that deals with graphs containing isolated vertices (result that was proved in
\cite[Proposition 2.3]{AAS1}). We include here an alternative proof using \cite[Proposition 3.1]{AMMS1}.


\begin{proposition}\label{onedimensional}
A Leavitt path algebra $L_K(E)$ contains a one-dimensional ideal (which is isomorphic to $K$) if and only if
$E$ contains an isolated vertex $u$. In this case $L_K(E)=Ku \oplus J$, where $J$ is an ideal isomorphic
to $L_K(F)$ and $F$ is the quotient graph $E/\{u\}$.
\end{proposition}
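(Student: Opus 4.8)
The plan is to prove each direction separately, using the characterization of line points and the structure of the socle developed above together with the quotient result $L_K(E)/I(H)\cong L_K(E/H)$.

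First I would prove the ``if'' direction. Suppose $u$ is an isolated vertex, i.e.\ $s^{-1}(u)=\emptyset$ and $r^{-1}(u)=\emptyset$. Then $\{u\}$ is hereditary (it is a sink, so $T(u)=\{u\}$) and it is saturated (no vertex feeds into $\{u\}$ at all, so the saturation condition is vacuous). Hence $I(\{u\})$ is a graded ideal and, since $u$ is a sink, $P_u=\{u\}$, so by the Proposition computing the socle (or directly by Lemma \ref{connected} with $\mu=\tau=u$) we get $I(\{u\})=uL_K(E)u=Ku\cong K$, a one-dimensional ideal. To see that $L_K(E)=Ku\oplus J$ as a direct sum of ideals, note that $1_{L_K(E)}=\sum_{v\in E^0}v$, so $u$ is a central idempotent: for any generator $x$ of $L_K(E)$ of the form $v$, $e$, or $e^*$, one checks $ux=xu$ using that $u$ emits and receives no edges (e.g.\ $ue=\delta_{u,s(e)}e=0=e\delta_{u,r(e)}=eu$, similarly for $e^*$, and $uv=\delta_{u,v}v=vu$). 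Therefore $L_K(E)=uL_K(E)\oplus(1-u)L_K(E)=Ku\oplus J$ with $J=(1-u)L_K(E)$ an ideal. Finally $J\cong L_K(E)/Ku=L_K(E)/I(\{u\})\cong L_K(E/\{u\})=L_K(F)$ by \cite[Lemma 2.3 (1)]{APS}, since $\{u\}\in\mathcal H_E$.

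For the ``only if'' direction, suppose $L_K(E)$ has a one-dimensional ideal $N\cong K$. Since $L_K(E)$ is semiprime, a one-dimensional ideal is in particular a minimal left ideal contained in the socle; by \cite[Theorem 4.2]{AMMS1} the socle is $I(P_l(E))$, and in the finite-graph setting discussed above the socle is the ideal generated by the sinks, so $N\subseteq \soc(L_K(E))\cong\bigoplus_i \mathcal M_{|P_{u_i}|}(K)$ over the sinks $u_i$. A one-dimensional ideal of a direct sum of (possibly infinite) matrix algebras $\mathcal M_{|P_{u_i}|}(K)$ forces some summand to equal $K$ itself, i.e.\ there is a sink $u$ with $|P_u|=1$; that means the only path ending at $u$ is the trivial path, so no edge has range $u$, i.e.\ $r^{-1}(u)=\emptyset$. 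Combined with $s^{-1}(u)=\emptyset$ (it is a sink), $u$ is isolated. Alternatively, and perhaps more cleanly, one invokes \cite[Proposition 3.1]{AMMS1} as suggested: a minimal left ideal of $L_K(E)$ has the form $L_K(E)v$ for a line point $v$ with $Kv\cong vL_K(E)v$, and it is one-dimensional as a two-sided ideal precisely when $v$ is a sink with $P_v=\{v\}$, which again is exactly the statement that $v$ is isolated.

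The main obstacle is the ``only if'' direction: one must go from the abstract hypothesis ``there exists a one-dimensional \emph{ideal}'' to the concrete graph statement, which requires knowing (i) that such an ideal sits inside the socle, and (ii) the explicit matrix-algebra decomposition of the socle, so that one-dimensionality pins down a sink with $|P_u|=1$. Both ingredients are available from \cite[Theorem 4.2]{AMMS1} / \cite[Proposition 3.1]{AMMS1} and the socle Proposition above, so the argument is a matter of assembling them correctly rather than any genuinely new difficulty; the ``if'' direction is routine once one observes that an isolated vertex is a central idempotent.
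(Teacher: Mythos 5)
Your proof is correct, but the ``only if'' direction follows a genuinely different route from the paper's. The paper applies \cite[Proposition 3.1]{AMMS1} directly to a nonzero element of the one-dimensional ideal $I$: either $I$ contains a vertex $u$ (and then one-dimensionality, together with the observation that any edge $f$ adjacent to $u$ would satisfy $f=uf\in I$ or $f=fu\in I$, forces $u$ to be isolated), or $I$ contains a nonzero polynomial $p(c,c^*)$ in a cycle without exits, which is ruled out because $\{p,p^2\}$ would be linearly independent in $I$. You instead observe that a one-dimensional two-sided ideal is a minimal left ideal, hence lies in the socle, and then use the decomposition $\soc(L_K(E))\cong\bigoplus_i\mathcal M_{\vert P_{u_i}\vert}(K)$ to pin down a sink $u$ with $\vert P_u\vert=1$, i.e.\ an isolated vertex. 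Both arguments are valid; yours leans on the explicit socle computation (stated in the paper for finite graphs, so it fits the standing convention, but it would need the general socle theorem of \cite{AMMS1} to cover the row-finite setting in which Corollaries \ref{invariantloop} and \ref{numberloops} are stated), whereas the paper's dichotomy argument is more elementary and works verbatim for row-finite graphs. One small caution: in your ``alternatively'' sentence you describe \cite[Proposition 3.1]{AMMS1} as producing minimal left ideals of the form $L_K(E)v$ for line points $v$; that is not what that proposition asserts (it is the vertex-or-cycle-without-exit dichotomy used in the paper), but since your primary argument does not rely on it, this does not affect correctness. Your ``if'' direction (an isolated vertex is a central idempotent, giving $L_K(E)=Ku\oplus(1-u)L_K(E)$ and $J\cong L_K(E/\{u\})$ via \cite[Lemma 2.3 (1)]{APS}) is the same routine verification the paper dismisses as straightforward.
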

\begin{proof} Suppose that $I$ is a one-dimensional ideal of $L_K(E)$ and consider a nonzero element $x\in I$.
Applying \cite[Proposition 3.1]{AMMS1} we have two possibilities:


(i) There is a vertex $u\in I$. Then, $u$ is the unique vertex in $I$ because the dimension of $I$ is one.
Moreover, $I$ does not contain any edges whose range or source is $u$, because if $f$ is in this case, then
$f=fu\in I$ or $f=uf\in I$, which would imply that the dimension of $I$ is strictly bigger than one by
\cite[Lemma 1.1]{S}. Thus $u$ is an isolated vertex in $E^0$.


(ii) There is a cycle $c$ without exits based at a vertex $v$ and a nonzero polynomial $p:=p(c,c^*)\in I$. If
$p$ is a scalar multiple of $v$ we can argue as in case (i). So we may suppose $p\not\in Kv$. In this case it
is easy to prove that $\{p,p^2\}$ is a linearly independent subset of $I$, which is not possible by
hypothesis.


Hence, $I=K u$ for $u$ an isolated vertex and $H:=E^0\setminus \{ u \}\in \mathcal{H}_E$. Finally, the fact
that $L_K(E)= K u \oplus J$, where $J=I(H)$,  is straightforward.


The converse is trivial.
\end{proof}


\begin{proposition}\label{inspired}
A Leavitt path algebra $L_K(E)$ contains a graded ideal $I$ isomorphic to $K[x,x^{-1}]$ if and only if $E$
contains an isolated single loop graph based at a vertex $u$. In this case $I\cap E^0=\{u\}$ and
$L_K(E)=I\oplus J$ where $J$ is an ideal of $L_K(E)$ isomorphic to $L_K(F)$ where $F$ is the quotient graph
$E/\{u\}$.
\end{proposition}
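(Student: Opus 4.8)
The plan is to mimic the structure of the proof of Proposition \ref{onedimensional}, replacing the role of an isolated vertex by that of an isolated single loop. First I would prove the ``only if'' direction. Suppose $I$ is a graded ideal of $L_K(E)$ with $I\cong K[x,x^{-1}]$. Since $I$ is graded, by the argument in the proof of Proposition \ref{hs}(i) we have $I=I(H)$ with $H=I\cap E^0\in\mathcal H_E$, and moreover $I\cong L_K(E_H)$, where $E_H$ is the restriction graph on $H$ (this is the standard description of graded ideals; one may alternatively invoke \cite[Proposition 5.2 and Theorem 5.3]{AMP} together with \cite[Lemma 2.3]{APS} applied to the complementary hereditary saturated set). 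So the task reduces to: which finite graphs $G$ (here $G=E_H$) have $L_K(G)\cong K[x,x^{-1}]$? Since $\dim_K K[x,x^{-1}]=\aleph_0$ and $K[x,x^{-1}]$ is commutative, a domain, and has no nontrivial idempotents, the graph $G$ must have no bifurcations, no sinks feeding forward into a proper part, and exactly one cycle: indeed two or more distinct vertices would give orthogonal idempotents $v,w$ contradicting that $K[x,x^{-1}]$ is a domain, and a vertex with two edges out would again break commutativity/domain-ness. Hence $G$ consists of a single vertex $u$ with a single loop, i.e. $H=\{u\}$ and the loop at $u$ has no exit in $E$ (as $H$ is hereditary it absorbs nothing more, and as $H$ is saturated no extra vertex of $E$ can feed only into $u$ unless already in $H$). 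That is exactly the statement that $E$ contains an isolated single loop graph based at $u$ and $I\cap E^0=\{u\}$.

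For the decomposition, once $H=\{u\}\in\mathcal H_E$, set $J=I(E^0\setminus H)$; since $E^0\setminus H$ is hereditary and saturated (because $\{u\}$ is, and $u$ emits only the loop, so removing it leaves a saturated complement — here one checks the saturation condition directly for the finite graph), $J$ is a graded ideal with $J\cong L_K(E/\{u\})=L_K(F)$ by \cite[Lemma 2.3 (1)]{APS}. Then $I\cap J$ is a graded ideal contained in $I\cong K[x,x^{-1}]$, hence either $0$ or all of $I$; it cannot be all of $I$ since $u\notin J$ (as $u\notin\overline{E^0\setminus H}=E^0\setminus H$). So $I\cap J=0$. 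Finally $I+J$ contains every vertex of $E$ (those in $H$ via $I$, the rest via $J$), hence contains $1_{L_K(E)}=\sum_{v\in E^0}v$, so $I+J=L_K(E)$ and $L_K(E)=I\oplus J$ as claimed.

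The converse direction is the easy one: if $E$ contains an isolated single loop at $u$, then $\{u\}\in\mathcal H_E$ and the restriction graph on $\{u\}$ is a single loop, whose Leavitt path algebra is well known to be $K[x,x^{-1}]$ (this is $\I_2$ in Table 1, relation (CK2) giving $u=ee^*$ so that $e$ is invertible with inverse $e^*$, yielding $K[e,e^{-1}]$). Thus $I:=I(\{u\})$ is a graded ideal isomorphic to $K[x,x^{-1}]$, and the direct sum decomposition follows as above.

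I expect the main obstacle to be the classification step: rigorously showing that the \emph{only} finite graph $G$ with $L_K(G)\cong K[x,x^{-1}]$ is the single loop. One must rule out graphs with a sink, graphs with more than one vertex on the cycle, and graphs attaching acyclic ``tails'' to the loop — in each case producing either an extra nonzero idempotent, a zero divisor, or a second linearly independent ``polynomial'' element $p$ with $\{p,p^2\}$ independent, exactly paralleling case (ii) in the proof of Proposition \ref{onedimensional}. Care is also needed to confirm that $I\cap E^0 = \{u\}$ rather than merely $u\in I\cap E^0$: this uses that $I\cong K[x,x^{-1}]$ has no nontrivial idempotents, so it cannot contain two distinct vertices, combined with the graded-ideal fact $I=I(I\cap E^0)$.
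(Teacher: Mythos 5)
There is a genuine gap, and it sits at the very first step of your ``only if'' direction. You assert that a graded ideal $I=I(H)$ with $H=I\cap E^0$ satisfies $I\cong L_K(E_H)$ for the \emph{restriction} graph on $H$. This is false in general: the correct statement, and the one the paper invokes, is \cite[Lemma 1.2]{AP}, which gives $I(H)\cong L_K({}_HE)$ for the ``hedgehog'' graph ${}_HE$, a graph that records all paths of $E$ \emph{entering} $H$ from outside. The restriction graph sees none of these. A concrete failure: take vertices $u,v,w$ with a single loop at $u$ and edges $f\colon v\to u$ and $g\colon v\to w$. Then $\{u\}$ is hereditary and saturated and its restriction graph is the single loop, yet $I(\{u\})$ contains the nonzero orthogonal idempotents $u$ and $ff^*$ (note $uf=0$ since $s(f)=v$), so it is not a domain and certainly not $K[x,x^{-1}]$. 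Your argument classifies the possible restriction graphs and therefore cannot rule out incoming edges to $u$ from outside $H$ --- which is precisely the ``isolated'' half of the conclusion. The problem propagates: the hereditarity of $E^0\setminus\{u\}$, which you need to form $J$ and get the direct sum, is \emph{equivalent} to there being no edges into $u$ from outside, so that step of your decomposition quietly assumes what remains to be proved.

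The paper closes exactly this gap with a short direct computation that you should add: for any edge $f$ with $s(f)=u$ or $r(f)=u$, both $f$ and $f^*$ lie in $I$ (multiply by $u$ on the appropriate side); since $I\cong K[x,x^{-1}]$ is a commutative domain, $ff^*=f^*f=r(f)\in I\cap E^0=\{u\}$, whence $r(f)=u$, then $uf=ff^*f=f\neq 0$ forces $s(f)=u$, and (CK2) together with $ff^*=u$ forces $s^{-1}(u)=\{f\}$. This shows the only edge meeting $u$ is a single loop, after which your decomposition argument (hereditarity and saturation of the complement, $I\cap J=0$ because $I$ has no proper nonzero graded ideals containing no vertex, and $I+J\ni 1$) goes through essentially as you wrote it. The remaining pieces of your proposal --- $I\cap E^0=\{u\}$ from the absence of nontrivial idempotents in a domain, and the converse direction --- are fine and agree with the paper.
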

\begin{proof} Suppose that $L_K(E)$ contains a graded ideal $I$ isomorphic to $K[x,x^{-1}]$. Then, by
\cite[Corollary 3.3 (1)]{AMMS2}, there is some $u\in I\cap E^0$. Since $I$ is a domain, it cannot contain
nontrivial orthogonal idempotents, so we have $I\cap E^0=\{u\}$.


Apply first \cite[Lemma 1.2]{AP} to get that $I\cong L_K(_HE)$, where $H=I\cap E^0$. It is clear that $u$ is
the only vertex contained in $I$ (as otherwise, $I$ would contain two orthogonal idempotents). Moreover, $u$
cannot be an isolated vertex in $E$ as otherwise, by Proposition \ref{onedimensional}, $I \cong Ku \oplus
L_K(G)$ (for a certain graph $G$). Since $I$ is a domain, then $L_K(G)=0$ and so $I\cong Ku \cong K\not\cong
K[x,x^{-1}]$.


Let $f$ be an edge in $E^1$ such that either $s(f)=u$ or $r(f)=u$. In both cases $f,f^*\in I$. Since $I$ is a
domain $ff^*=f^*f=r(f)\in I\cap E^0=\{u\}$, so that $r(f)=u$. Note that $ff^*=u$ also implies that $s(f)=u$,
and by relation (CK2), that $s^{-1}(u)=\{f\}$. Thus, $L_K(E)= I \oplus J$, for $J$ the graded ideal generated
by the hereditary and saturated set $E^0\setminus \{u\}$.


The converse is obvious.
\end{proof}


\begin{corollary}\label{invariantloop}
Let $E$ and $F$ be row-finite graphs such that $L_K(E)\cong L_K(F)$ as rings. Then $E$ has an isolated loop if
and only if so does $F$.
\end{corollary}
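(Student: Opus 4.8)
The plan is to deduce Corollary \ref{invariantloop} from Proposition \ref{inspired} by recasting ``having an isolated loop'' as a \emph{ring}-theoretic property that is visibly transported by a ring isomorphism. Indeed, Proposition \ref{inspired} says precisely that $E$ has an isolated (single) loop if and only if $L_K(E)$ contains a graded ideal isomorphic to $K[x,x^{-1}]$. So the statement will follow once we observe that the property ``$L_K(E)$ contains a graded ideal isomorphic (as a ring) to $K[x,x^{-1}]$'' is preserved under ring isomorphisms.

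First I would let $\varphi\colon L_K(E)\to L_K(F)$ be a ring isomorphism and suppose $E$ has an isolated loop. By Proposition \ref{inspired} there is a graded ideal $I\trianglelefteq L_K(E)$ with $I\cong K[x,x^{-1}]$ as rings. Then $\varphi(I)$ is an ideal of $L_K(F)$, and $\varphi|_I\colon I\to\varphi(I)$ is a ring isomorphism, so $\varphi(I)\cong K[x,x^{-1}]$ as rings. The one point that needs care is \emph{gradedness}: $\varphi$ need not be a graded morphism. Here I would invoke Proposition \ref{hs}(i): since a graded ideal of a Leavitt path algebra is exactly an ideal generated by idempotents (as recalled in the proof of Proposition \ref{hs}), and ring isomorphisms send idempotents to idempotents, $\varphi(I)$ is again generated by idempotents, hence is a graded ideal of $L_K(F)$. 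Thus $L_K(F)$ contains a graded ideal isomorphic to $K[x,x^{-1}]$, and Proposition \ref{inspired} applied to $F$ yields that $F$ has an isolated loop. By symmetry (applying the same argument to $\varphi^{-1}$) the converse holds, giving the equivalence.

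I do not expect any serious obstacle here; the corollary is a formal consequence of the two tools already in hand. The only subtlety worth flagging explicitly is the one just mentioned — that one must not assume $\varphi$ is graded — which is handled by the idempotent-generation characterization of graded ideals rather than by any direct manipulation of the $\mathbb{Z}$-grading. Everything else (ideals map to ideals under a ring isomorphism, a ring isomorphism restricts to a ring isomorphism between an ideal and its image) is routine, so the write-up should be just a few lines.
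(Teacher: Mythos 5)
Your argument is exactly the one the paper gives: apply Proposition \ref{inspired} to produce a graded ideal of $L_K(E)$ isomorphic to $K[x,x^{-1}]$, transport it via Proposition \ref{hs}(i) (gradedness being preserved because graded ideals are those generated by idempotents), and apply Proposition \ref{inspired} again to $F$, with symmetry giving the converse. The proposal is correct and coincides with the paper's proof, including the explicit handling of the subtlety that $\varphi$ need not be graded.
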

\begin{proof}
Consider $\varphi: L_K(E)\to L_K(F)$, a ring isomorphism and suppose that $E$ contains an isolated loop. By
Proposition \ref{inspired}, $L_K(E)$ contains a graded ideal $I$ isomorphic to $K[x,x^{-1}]$. By Proposition
\ref{hs} (i), $\varphi(I)$ is a graded ideal of $L_K(F)$. Since it is isomorphic to $K[x,x^{-1}]$, another
application of Proposition \ref{inspired} gives the result.
\end{proof}


\begin{corollary}\label{numberloops}Let $E$ and $F$ be row-finite graphs such that $L_K(E)\cong L_K(F)$ as
rings. Then $E$ has exactly $n$ different isolated loops if and only if so does $F$.
\end{corollary}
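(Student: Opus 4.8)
The statement to prove is Corollary \ref{numberloops}: if $L_K(E)\cong L_K(F)$ as rings, then $E$ has exactly $n$ isolated loops if and only if $F$ does. The plan is to iterate the argument of Corollary \ref{invariantloop}, but this requires some care: a single application of Proposition \ref{inspired} peels off \emph{one} isolated loop and writes $L_K(E)=I\oplus J$ with $I\cong K[x,x^{-1}]$ and $J\cong L_K(F_1)$, where $F_1=E/\{u\}$ is the quotient graph. The key observation is that removing an isolated loop vertex $u$ from $E$ does not affect the remaining isolated loops: since $u$ is isolated (its only edge is the loop at $u$), no edge of $E$ has source or range $u$ other than that loop, so the quotient graph $E/\{u\}$ has exactly one fewer isolated loop than $E$, and all other connected components are untouched.

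First I would make precise the claim that the number of isolated loops of $E$ equals the number of direct summands isomorphic to $K[x,x^{-1}]$ in a maximal decomposition of $L_K(E)$ of the form $K[x,x^{-1}]\oplus\cdots\oplus K[x,x^{-1}]\oplus J'$ where $J'$ has no graded ideal isomorphic to $K[x,x^{-1}]$. This follows by induction on the number of isolated loops, using Proposition \ref{inspired} at each step together with the remark above that $E/\{u\}$ has one fewer isolated loop; the base case is a graph with no isolated loop, which by Proposition \ref{inspired} has no graded ideal isomorphic to $K[x,x^{-1}]$. Conversely, any graded ideal of $L_K(E)$ isomorphic to $K[x,x^{-1}]$ corresponds, again by Proposition \ref{inspired}, to an isolated loop, and distinct such ideals $I\cap E^0$ pick out distinct vertices, so the number of isolated loops is an intrinsic upper bound for how many times $K[x,x^{-1}]$ can be split off.

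Then, given a ring isomorphism $\varphi\colon L_K(E)\to L_K(F)$, I would transport the decomposition: $\varphi$ sends a graded ideal $I\cong K[x,x^{-1}]$ of $L_K(E)$ to a graded ideal $\varphi(I)$ of $L_K(F)$ (Proposition \ref{hs}(i)), still isomorphic to $K[x,x^{-1}]$, hence by Proposition \ref{inspired} corresponding to an isolated loop of $F$; and $\varphi$ restricts to an isomorphism of the complementary summands $J\xrightarrow{\sim}\varphi(J)$. Applying the previous paragraph's characterization to both sides shows the number of isolated loops is preserved. Alternatively, and perhaps more cleanly, one can phrase the whole thing without explicit iteration: the number of isolated loops of $E$ equals the maximal length $n$ of a chain of graded ideals $0=I_0\subsetneq I_1\subsetneq\cdots\subsetneq I_n$ with each $I_k/I_{k-1}\cong K[x,x^{-1}]$ as a (non-unital) ring and $I_k$ a direct summand of $L_K(E)$; this is manifestly preserved by ring isomorphisms sending graded ideals to graded ideals.

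The main obstacle is the bookkeeping in the inductive step: one must verify that after splitting off one $K[x,x^{-1}]$ summand, the residual ideal $J$ is again a Leavitt path algebra $L_K(F_1)$ \emph{and} that $F_1$ has precisely one fewer isolated loop, so that the induction hypothesis applies. Proposition \ref{inspired} supplies the first half ($J\cong L_K(E/\{u\})$), and the second half is the elementary graph-theoretic remark that deleting an isolated-loop vertex removes exactly that loop and leaves every other vertex, edge, and in particular every other isolated loop intact (and creates no new ones, since saturation plays no role: $u$ emitted only its own loop). Once these two points are nailed down, the corollary follows immediately from Corollary \ref{invariantloop} applied repeatedly, or from the chain-of-ideals reformulation above.
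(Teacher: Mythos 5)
Your proposal is correct and follows essentially the same route as the paper: peel off one isolated loop at a time via Proposition \ref{inspired}, obtaining $L_K(E)=I\oplus A$ with $A\cong L_K(E/\{u\})$, transport $I$ through the isomorphism to an isolated-loop ideal of $L_K(F)$, observe that the complementary summands are isomorphic and that each quotient graph has exactly one fewer isolated loop, and descend. The extra care you take in justifying that $E/\{u\}$ has precisely one fewer isolated loop is a point the paper states without proof (as $n_E=1+n_{(E/\{u\})}$), and your chain-of-graded-ideals reformulation is an optional repackaging of the same argument rather than a genuinely different proof.
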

\begin{proof}
Denote by $n_E$ and $n_F$ the number of isolated loops in $E$ and $F$, respectively.


Let $f: L_K(E)\to L_K(F)$ be a ring isomorphism. If $n_E=0$, by Corollary \ref{invariantloop}, $n_F=0$. Let
$I$ be an ideal of $L_K(E)$ generated by an isolated loop based at a vertex $u\in E^0$. By Proposition
\ref{inspired}, $L_K(E)=I\oplus A$, where $A\cong L_K(E/\{u\})$. Denote by $J=f(I)$. As shown in the proof of
Proposition \ref{inspired}, $J$ is generated by an isolated loop based at a vertex $v\in F^0$ and $L_K(F)= J
\oplus B$, where $B\cong L_K(F/\{v\})$.


Then $A\cong B$ and we repeat the same reasoning taking into account that $n_E=1+n_{(E/\{u\})}$ and
$n_F=1+n_{(F/\{v\})}$. If either $n_E$ or $n_F$ is finite, then a descending process shows that $n_E=n_F$.
Otherwise both are countable and hence equal.
\end{proof}


\begin{definition}{\rm
We define ILN (isolated loops number) as the number of isolated loops in a row-finite graph $E$. By Corollary
\ref{numberloops}, this number is an invariant for Leavitt path algebras.}
\end{definition}


The following invariant we will consider in our classification task will be HS, already explained (see
Definition \ref{HS}), and in case $HS=1$ we use the following result.


\begin{proposition} Let $E$ and $F$ be row-finite graphs such there exists a ring isomorphism
$\varphi:L_K(E)\to L_K(F)$. Suppose that ${\rm HS}_E=1={\rm HS}_F$ and let $I$ and $J$ be the only nontrivial
graded ideals of $L_K(E)$ and $L_K(F)$, respectively. Then $J=\varphi(I)$ and $L_K(E)/I\cong L_K(F)/J$.
\end{proposition}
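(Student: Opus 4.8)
The plan is to reduce everything to the lattice of graded ideals via Proposition \ref{hs}. First I would unwind the hypothesis on ${\rm HS}$. By Definition \ref{HS}, ${\rm HS}_E=1$ means $|\mathcal H_E|=3$, so $\mathcal H_E=\{\emptyset,H,E^0\}$ for a unique hereditary saturated subset $H$ with $\emptyset\neq H\neq E^0$. The lattice isomorphism $\mathcal H_E\to\mathcal L_{gr}(L_K(E))$ given by $H'\mapsto I(H')$ (invoked in the proof of Proposition \ref{hs}, from \cite[Theorem 5.3]{AMP}) then shows that $L_K(E)$ has exactly one nontrivial graded ideal, namely $I=I(H)$; in particular $0\neq I\neq L_K(E)$. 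Identically, $L_K(F)$ has exactly one nontrivial graded ideal $J=I(H')$ for the unique nontrivial $H'\in\mathcal H_F$.

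Next I would prove $\varphi(I)=J$. Since $\varphi$ is a ring isomorphism it carries ideals to ideals, and it carries nonzero proper ideals to nonzero proper ideals; as $0\neq I\neq L_K(E)$ we get $0\neq\varphi(I)\neq L_K(F)$. Moreover, $\varphi(I)$ is \emph{graded} by Proposition \ref{hs}(i) (this uses that graded ideals are precisely those generated by idempotents, a property preserved by ring isomorphisms). Thus $\varphi(I)$ is a graded ideal of $L_K(F)$ that is neither $0$ nor all of $L_K(F)$, and since $J$ is the only such ideal we conclude $\varphi(I)=J$.

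Finally, because $\varphi$ is a ring isomorphism with $\varphi(I)=J$, it induces a well-defined ring isomorphism $\overline\varphi\colon L_K(E)/I\to L_K(F)/J$, $\overline\varphi(x+I)=\varphi(x)+J$, by the usual correspondence-theorem argument; this gives $L_K(E)/I\cong L_K(F)/J$, as required. (If one wishes, \cite[Lemma 2.3 (1)]{APS} further identifies these quotients with $L_K(E/H)$ and $L_K(F/H')$.) I do not expect a genuine obstacle here: the only point that needs care is the appeal to Proposition \ref{hs}(i) guaranteeing that $\varphi(I)$ is again graded — without it one could not pin $\varphi(I)$ down, since $L_K(F)$ may possess nongraded proper ideals as well.
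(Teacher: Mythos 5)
Your argument is correct and is essentially the paper's own proof, just written out in more detail: both use Proposition \ref{hs}(i) to see that $\varphi(I)$ is a nonzero proper graded ideal of $L_K(F)$, hence equal to $J$ by the uniqueness forced by ${\rm HS}_F=1$, and then pass to quotients. No discrepancies to report.
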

\begin{proof}
By Proposition \ref{hs} (1), $\varphi(I)$ is a graded ideal, and since $0\neq I\neq L_K(E)$ and ${\rm HS}_F=1$,
then $\varphi(I)=J$. Using this fact, the result follows.
\end{proof}

\vfill\eject

Thus, the proposition above shows that the quotient $L_K(E)/I(H)$, for the case that $I(H)$ is the only
nontrivial graded ideal, is an invariant that we will denote by L/I.


The final invariant that we will need is denoted by MT3+L, and it characterizes when a Leavitt path algebra is
primitive, as was proved in \cite[Theorem 4.6]{APS2}. Recall that a graph $E$ satisfies \emph{Condition} (MT3)
if for every $v,w\in E^0$ there exists $u\in E^0$ such that $v\geq u$ and $w\geq u$.


Note that this order of considering the invariants is consistent for all the cases $n=1,2,3$ because for the
two graphs that had to be distinguished in case $n=2$, namely the fourth and the seventh graph in Table 2,
they had both the same $[1_{L_K(E)}]$, and the same ILN, so they gave no information.

\medskip

Finally, in the last column of the tables, and as we did in the $n=2$ case, we have included an explicit
algebraic description of $L_K(E)$ when this algebra is known.

\medskip

\subsection{Nonzero socle and  ${\bf K}_0=\Z$}

In this situation, after taking one representative of every orbit and after eliminating shift graphs as we have
explained, the \emph{Magma} code gave an output of $9$ graphs. In the following table we show that all of them
actually provide non-isomorphic Leavitt path algebras and that, in our list of invariants, it is enough if we
stop at $[1_{L_K(E)}]$.

\medskip

The isomorphisms of the Leavitt path algebras of the first and second graphs can be obtained by
\cite[Proposition 3.5]{AAS1}. The Leavitt path algebra of the third graph, call it $E$, is the Toeplitz
algebra ${\mathcal T}$ as follows: first we observe that the unique possible out-split of the graph $\II_3$
gives

\medskip

\vskip 0.1cm
$$\begin{matrix} F &  & \tiny\xymatrix{  \bullet\ar@(ul,ur)\ar[r] & \bullet \ar[r] & \bullet }\end{matrix}$$

\medskip

\noindent which it turn gives the third graph of the previous table by a shift process. Hence by
\cite[Theorem 2.8]{AALP} and \cite[Theorem 3.11]{ALPS} we get that ${\mathcal T}\cong L_K(\II_3)\cong
L_K(F)\cong L_K(E)$.

\medskip

\begin{center}
\begin{tabular}{|c|c|c|c||c|}
\hline
$E$  & $\soc $ & $L/\soc$ & $[1]$ & $L_K(E)$ \\
\hline
& & & & \\
$\ \ \ \ \tiny\xymatrix{   \bullet \ar[r] & \bullet  & \bullet\ar[l]  }\ \ \ \  \  $   & $\mathcal M_3(K)$ &
  &  & $\mathcal M_3(K)$ \\
$\ \ \ \tiny\xymatrix{    & \bullet  &  \cr
             \bullet \ar[rr] \ar[ur] & &\bullet \ar[ul] }\ \ \  $   & $\xymatrix{ \\ \mathcal M_4(K)}$ & &  &
              $\xymatrix{ \\ \mathcal M_4(K)}$\\
& & & & \\
$\ \ \ \tiny\xymatrix{   \bullet\ar@(dl,ul)\ar[r] & \bullet  & \bullet \ar[l]  }\ \ \  $  &
$\mathcal M_\infty(K)$ &
$K[x,x^{-1}]$ &   & ${\mathcal T}$ \\
$\ \ \ \tiny\xymatrix{   & \bullet   & \cr
            \bullet\ar@(dl,ul)  \ar[ur] & &\bullet \ar[ll] \ar[ul] }\ \ \  $   &
            $\xymatrix{ \\ \mathcal M_\infty(K)}$ & $\xymatrix{ \\ \mathcal M_2(K[x,x^{-1}])}$   &   &
             $\xymatrix{ \\ \text{---}}$ \\
& & & & \\
$\ \ \ \tiny\xymatrix{  \bullet \ar@(dl,ul)  \ar@/^.5pc/[r]  & \bullet\ar@(dr,ur)\ar@/^.5pc/[l]& \bullet  }\
 \ \  $   & $ K$ & & & $K\oplus L(1,2)$ \\
$\ \ \  \tiny\xymatrix{   & \bullet   & \cr
            \bullet \ar@(dl,ul)  \ar@/^.5pc/[rr] \ar[ur] & &\bullet  \ar@/^.5pc/[ll] \ar[ul] }\ \ \  $ &
             $\xymatrix{ \\ \mathcal M_\infty(K)}$ & $\xymatrix{ \\ L(1,2)}$ &  $\xymatrix{ \\ 2}$
              & $\xymatrix{ \\ \text{---}}$ \\
& & & & \\
$\ \ \  \tiny\xymatrix{    & \bullet   & \cr
            \bullet \ar@(dl,ul)    \ar[ur] & &\bullet  \ar[ll] \ar[ul]  \ar@(dr,ur) }\ \ \  $  &
            $\xymatrix{ \\ \mathcal M_\infty(K)}$ & $\xymatrix{ \\ L_K(\II_2)}$ &   &
            $\xymatrix{ \\ \text{---}}$ \\
 & & & & \\
$\ \ \  \tiny\xymatrix{  \bullet \ar@(dl,ul) \ar@/^.5pc/[r]  & \bullet\ar@(ul,ur) \ar@/^.5pc/[l]\ar[r]&
 \bullet  }\ \ \  $   & $\mathcal M_\infty(K)$ & $L(1,2)$ & $0$ & --- \\
$\ \ \ \tiny\xymatrix{   & \bullet   & \cr
  \bullet \ar@(dl,ul)  \ar@/^.5pc/[rr] \ar[ur] & & \bullet  \ar@/^.5pc/[ll] \ar[ul] \ar@(dr,ur) }\ \ \  $   &
  $\xymatrix{ \\ \mathcal M_\infty(K)}$
 & $\xymatrix{ \\ L(1,2)}$ & $\xymatrix{ \\ 1}$  & $\xymatrix{ \\ \text{---}}$ \\
& & & & \\
\hline
\end{tabular}

{\small Table 3.1: Nonzero socle and  ${\bf K}_0=\Z$.}
\end{center}
\vskip 0.1cm

\vfill\eject

\subsection{Nonzero socle and ${\bf K}_0=\Z^2$}

For this class we get $11$ graphs; again all of them have non-isomorphic Leavitt path algebras. However, in
this case, it is enough to compute, in our ordered list of invariants, until ILN (note that the only two
graphs for which ILN is computed, cannot be distinguished by $[1]$, as it is $(1,1)$ in the two cases).

\bigskip
\bigskip

The isomorphisms here are based on previous cases (see Table 2) and on several well-known facts such as: the
decomposition of Leavitt path algebras of disconnected graphs as direct sums of the Leavitt path algebras of
the connected components; the description of Leavitt path algebras of finite and acyclic graphs which give
the finite-dimensional ones (see \cite[Proposition 3.5]{AAS1}); or, in more generality, the description of
the Leavitt path algebras satisfying Condition (NE) (i.e., such that no cycle in the graph has an exit),
which give the noetherian Leavitt path algebras \cite[Theorems 3.8 and 3.10]{AAS2} as those which are finite
direct sums of finite matrices over $K$ or $K[x,x^{-1}]$.

\bigskip
\bigskip

\begin{center}
\begin{tabular}{|c|c|c|c||c|}
\hline
$E$ & $\soc $ & $L/\soc$ & ILN  & $L_K(E)$\\
\hline
& & & & \\
 $\ \ \tiny\xymatrix{ \bullet & \bullet \ar[r] & \bullet  }$\ \ & $K\oplus \mathcal M_2(K)$ &  & &
 $K\oplus \mathcal M_2(K)$ \\
& & & & \\
$\ \ \tiny\xymatrix{ \bullet  & \bullet \ar[l]\ar[r] & \bullet }$ \ \ & $\mathcal M_2(K)^2$ &   &
& $\mathcal M_2(K)^2$ \\
& & & & \\
$ \ \ \tiny\xymatrix{ \bullet & \bullet \ar@(ul,ur) & \bullet \ar[l] }$ \ \ &$K$  & $\mathcal
M_2(K[x,x^{-1}])$ & &
$K\oplus \mathcal M_2(K[x,x^{-1}])$ \\
& & & &  \\
$\ \ \tiny\xymatrix{ \bullet & \bullet\ar@(ul,ur)\ar[r] & \bullet }$ \ \ & $K\oplus \mathcal M_\infty(K)$ &
  &   &
$K \oplus \mathcal T$ \\
& & & & \\
$\ \ \tiny\xymatrix{ \bullet\ar@(ul,ur) & \bullet  & \bullet\ar[l] }$ \ \ & $\mathcal M_2(K)$ & $K[x,x^{-1}]$
& &
$K[x,x^{-1}]\oplus \mathcal M_2(K)$  \\
& & & & \\
 $\ \ \tiny\xymatrix{ \bullet\ar@(ul,ur) & \bullet \ar[l]\ar[r] & \bullet  }$\ \ & $\mathcal M_2(K)$ &
 $\mathcal M_2(K[x,x^{-1}])$ &  & $\mathcal M_2(K)\oplus \mathcal M_2(K[x,x^{-1}])$ \\
& & & & \\
 $\ \ \tiny\xymatrix{ \bullet & \bullet \ar@(ul,ur)\ar[l] \ar[r]& \bullet }$ \ \ & $\mathcal M_\infty(K)^2$ &
   &  & ---  \\
& & & &  \\
$\ \ \tiny\xymatrix{ \bullet & \bullet\ar@(ul,ur)\ar[r] & \bullet \ar@(ul,ur)  }$ \ \ & $K$ &  $L_K(\II_2)$
 & & ---  \\
& & & & \\
$\ \ \tiny\xymatrix{ \bullet\ar@(ul,ur) & \bullet \ar@(ul,ur)\ar[r] & \bullet }$ \ \ &  $\mathcal M_\infty(K)$&
$K[x,x^{-1}]^2$   & 1 &  $K[x,x^{-1}]\oplus \mathcal T$ \\
& & & &  \\
$\ \ \tiny\xymatrix{ \bullet\ar@(ul,ur) & \bullet \ar@(ul,ur)\ar[l] \ar[r]& \bullet }$ \ \ &
 $\mathcal M_\infty(K) $ &
 $L_K(\II_2)$ &  & --- \\
& & & &  \\
$\ \ \tiny\xymatrix{ \bullet \ar@(ul,ur)\ar[r] & \bullet & \bullet\ar@(ul,ur)\ar[l] }$\ \ &
 $\mathcal M_\infty(K)$ &
$K[x,x^{-1}]^2$   & 0 & --- \\
& & & & \\

\hline
\end{tabular}
\medskip
\bigskip

{\small Table 3.2: Nonzero socle and ${\bf K}_0=\Z^2$.}
\end{center}

\bigskip
\bigskip

\subsection{Nonzero socle and ${\bf K}_0=\Z^3$}

In this case we find $3$ graphs and also $3$ different Leavitt path algebras. However, now the socle suffices
to distinguish any two of them.

\bigskip
\bigskip

\begin{center}
\begin{tabular}{|c|c||c|}
\hline
$E$ & $\soc$  & $L_K(E)$   \\
\hline
& &  \\
$\ \ \tiny\xymatrix{ \bullet & \bullet & \bullet}$\ \ & $K^3$   & $K^3$  \\
& &   \\
$\ \ \tiny\xymatrix{ \bullet & \bullet  & \bullet \ar@(ul,ur)}$ \ \ & $K^2$ & $K^2\oplus K[x,x^{-1}]$ \\
& &   \\
$\ \ \tiny\xymatrix{ \bullet & \bullet \ar@(ul,ur) & \bullet\ar@(ul,ur) }$ \ \ & $K$ & $K\oplus
K[x,x^{-1}]^2$\\
& & \\
\hline
\end{tabular}
\smallskip

{\small Table 3.3: Nonzero socle and  ${\bf K}_0=\Z^3$.}
\end{center}

\vfill\eject

\subsection{Nonzero socle and ${\bf K}_0=\Z\times \Z_2$} We find only $2$ graphs which again give $2$ Leavitt
path algebras that are not isomorphic. In this case the socle gives no information (both have socle equal to
${\mathcal M}_\infty(K)$), but the quotient module the socle is enough to get this conclusion.

\begin{center}
\begin{tabular}{|c|c||c|}
\hline
$E$ & $L/\soc$ & $L_K(E)$     \\
\hline
& &  \\
$\ \ \tiny\xymatrix{   & \bullet   & \cr
            \bullet    \ar@/^.5pc/[rr] \ar[ur] & &\bullet  \ar@/^.5pc/[ll] \ar[ul] }$
            \ \  & $   \xymatrix{ \\ \mathcal M_2(K[x,x^{-1}])}$ & $\xymatrix{ \\ \mbox{---}}$ \\
&  & \\
$\ \ \ \ \tiny\xymatrix{   & \bullet   & \cr
            \bullet \ar@(dl,ul)  \ar[rr] \ar[ur] & &\bullet  \ar[ul] }$
             \ \  & $\xymatrix{ \\ K[x,x^{-1}]}$ & $\xymatrix{ \\ \mbox{---}}$ \\
& & \\
\hline
\end{tabular}

\smallskip

{\small Table 3.4: Nonzero socle and  ${\bf K}_0=\Z\times\Z_2$.}
\end{center}

\subsection{Zero socle and ${\bf K}_0=0$} This is a particular case, as we do obtain $3$ different graphs but
their Leavitt path algebras are isomorphic (hence they all have the same invariants so that we do not include
any on Table 3.5).

\medskip

\begin{center}
\begin{tabular}{|c||c|}
\hline
$E$ & $L_K(E)$ \\
\hline
&  \\
\ \ \ \ \

\tiny\xymatrix{  &\bullet \ar@/^.5pc/[dl] \ar[dr] &  \cr
               \bullet \ar@(dl,ul) \ar@/^.5pc/[ur] & & \bullet \ar[ll] \ar@(dr,ur) }

\ \ \ \ \ \ \ \ \ \ \ \ \

\tiny \xymatrix{ &  \bullet \ar[dl] \ar[dr]   & \cr  \bullet \ar@(dl,ul) \ar@/^.5pc/[rr] & &
\bullet\ar@(dr,ur)\ar@/^.5pc/[ll] }

\ \ \ \ \ \ \ \ \ \ \ \ \

\tiny\xymatrix{  & \bullet \ar@(ul,ur) \ar@/^.5pc/[dl] \ar[dr] &  \cr
              \bullet \ar@(dl,ul) \ar@/^.5pc/[ur] \ar@/^.5pc/[rr] & &\bullet \ar@/^.5pc/[ll]
               \ar@(dr,ur) } \ \ \ \ \ \ \ & $\xymatrix{ \\ L(1,2)}$ \\

& \\
\hline
\end{tabular}

\smallskip

{\small Table 3.5: Zero socle and  ${\bf K}_0=0$.}
\end{center}

The Leavitt path algebras of these graphs are purely infinite simple and have the same $[1_{L_K(E)}]$
(equal to $0$). Hence \cite[Proposition 4.2]{AALP} gives that they are all isomorphic to $L(1,2)$. It is
interesting that, at least for the case $n=3$, only in this table do we get graphs which give isomorphic
Leavitt path algebras, and this happens precisely when the algebras are purely infinite simple, so that we can
make use of the aforementioned Classification Question for purely infinite simple unital Leavitt path
algebras.

\subsection{Zero socle and ${\bf K}_0=\Z$} Our simplification process shows that there are $11$ different
graphs in this class. Here, and in the remaining tables, we have zero socle so that clearly the columns for
the socle and the quotient module the socle are useless, hence we must rely on the other invariants. Actually,
here we need to use all of them in order to see that the Leavitt path algebras of these graphs are all
non-isomorphic.

\medskip

The explicit isomorphisms can be obtained by previous cases (see Table 2), by decomposition into direct sums
as mentioned before and by applications of \cite[Theorem 3.8]{AAS2}. Hence, the table of the $11$ cases with
their corresponding set of date for the invariants is as follows.


\begin{center}
\begin{tabular}{|c|c|c|c|c|c||c|}
\hline
$E$  & $[1]$ & ILN & HS  & L/I &  MT3+L & $L_K(E)$  \\
\hline
&  & & & & & \\
$\ \ \tiny\xymatrix{ \bullet \ar[r] & \bullet \ar@(ul,ur) & \bullet \ar[l] }\ \ $
 & 3 &  &   &   &  & $\mathcal M_3 (K[x,x^{-1}])$\\
\
$\ \tiny\xymatrix{   & \bullet \ar[dl] \ar[dr] & \cr
            \bullet\ar@(dl,ul)   & &\bullet \ar[ll] }\ $  & $\xymatrix{ \\ 4}$
               &   & & &  & $\xymatrix{ \\ \mathcal M_4(K[x,x^{-1}])}$\\
& & & & & & \\
$\ \tiny\xymatrix{ \bullet \ar@(dl,ul) \ar[r] & \bullet \ar@(ul,ur) & \bullet \ar[l]}\ $  & $1$ & $0$  & $1$
& $K[x,x^{-1}]$ &  F & --- \\

 $\ \tiny\xymatrix{  & \bullet \ar[dl] \ar[dr] & \cr
            \bullet\ar@(dl,ul)   & &\bullet \ar@(dr,ur)\ar[ll] }\ $ & $\xymatrix{ \\ 2}$ & $\xymatrix{ \\ 0}$
             & $\xymatrix{ \\ 1}$ &  $\xymatrix{ \\ \mathcal M_2(K[x,x^{-1}])}$& & $\xymatrix{ \\ \text{---}}$
              \\
& & & & & & \\

$\ \tiny\xymatrix{  \bullet \ar@(dl,ul) \ar@/^.5pc/[r]  & \bullet\ar@(dr,ur)\ar@/^.5pc/[l] & \bullet
 \ar@(dr,ur) }\ $
   & $1$ & $1$  &     & & & $L(1,2) \oplus K[x,x^{-1}]$  \\
& & & & & & \\

$\ \tiny\xymatrix{   & \bullet \ar[dl] \ar@/^.5pc/[dr] & \cr
            \bullet\ar@(dl,ul)   & &\bullet \ar@(dr,ur) \ar[ll] \ar@/^.5pc/[ul]}\ $  &
            $\xymatrix{ \\ 2}$ & $\xymatrix{ \\  0}$ & $\xymatrix{ \\ 1}$ &
              $\xymatrix{ \\ L(1,2)}$ & & $\xymatrix{ \\ \mbox{---}}$ \\

& & & & & & \\
 $\ \tiny\xymatrix{   & \bullet\ar@(ul,ur) \ar[dl] \ar[dr] & \cr
            \bullet\ar@(dl,ul)   & &\bullet \ar@(dr,ur)\ar[ll] }\ $  &
            $\xymatrix{ \\ 1}$ & $\xymatrix{ \\ 0}$ & $\xymatrix{ \\ 2}$  &  &  & $\xymatrix{ \\ \mbox{---}}$ \\
& & & & & & \\

$\ \tiny\xymatrix{  \bullet \ar@(dl,ul) \ar@/^.5pc/[r]  & \bullet\ar[r]\ar@(ul,ur)\ar@/^.5pc/[l] & \bullet
\ar@(dr,ur) }\ $ & $0$ & $0$ & $1$    &  &   & --- \\
& & & & & & \\

& & & & & & \\
$\ \tiny\xymatrix{   & \bullet\ar@(ul,ur) \ar[dl] \ar[dr] & \cr
            \bullet\ar@(dl,ul)  \ar@/^.5pc/[rr] & &\bullet \ar@(dr,ur)\ar@/^.5pc/[ll] }\ $  &
            $\xymatrix{ \\ 1}$ & $\xymatrix{ \\ 0}$ & $\xymatrix{ \\ 1}$
             &  $\xymatrix{ \\ K[x,x^{-1}]}$  & $\xymatrix{ \\ \mbox{T}}$ & $\xymatrix{ \\ \mbox{---}}$ \\
& & & & & & \\
& & & & & & \\
$\ \ \ \ \ \tiny\xymatrix{   & \bullet\ar@(ul,ur) \ar[dl] \ar@/^.5pc/[dr] & \cr
            \bullet\ar@(dl,ul)   & &\bullet \ar@(dr,ur)\ar[ll] \ar@/^.5pc/[ul]}\ \ \ \ \ $  &
            $\xymatrix{ \\ 1}$ & $\xymatrix{ \\ 0}$ & $\xymatrix{ \\ 1}$
             &   $\xymatrix{ \\ L(1,2)}$ &  & $\xymatrix{ \\ \mbox{---}}$ \\
& & & & & & \\

$\ \tiny\xymatrix{ \bullet \ar@(dl,ul) \ar@/^.5pc/[r]  & \bullet\ar@(ul,ur)\ar@/^.5pc/[l] \ar@/^.5pc/[r]&
\bullet \ar@(dr,ur)\ar@/^.5pc/[l] }\ $  & $0$ & $0$ & $0$    &  & & --- \\
& & & & & & \\
\hline
\end{tabular}


{\small Table 3.6: Zero socle and  ${\bf K}_0=\Z$.}
\end{center}


\subsection{Zero socle and ${\bf K}_0=\Z^2$} In this situation we get $5$ graphs, once more providing $5$
different isomorphism classes of Leavitt path algebras. In order to prove this, two
invariants ($[1]$ and ILN) are sufficient.

\begin{center}
\begin{tabular}{|c|c|c||c|}
\hline
$E$ & $[1]$ & ILN & $L_K(E)$ \\
\hline
&  & & \\
&  & & \\
$\ \tiny\xymatrix{ \bullet \ar@(ul,ur) & \bullet \ar@(ul,ur) & \bullet \ar[l] }\ $ & $(2, 1)$ &    &
$K[x,x^{-1}]\oplus \mathcal M_2(K[x,x^{-1}])$ \\
& & & \\
$\ \tiny\xymatrix{ \bullet \ar@(ul,ur) & \bullet \ar[l] \ar[r]& \bullet \ar@(ul,ur)}\ $ & $(2, 2)$ &   &
 $\mathcal M_2(K[x,x^{-1}])^2$ \\
& & & \\
$\ \tiny\xymatrix{  \bullet \ar@(ul,ur) & \bullet \ar@(ul,ur) & \bullet \ar[l]\ar@(ul,ur) }\ $ & $(1,1)$ & $1$
& ---  \\
& & & \\
$\ \tiny\xymatrix{ \bullet \ar@(ul,ur) \ar[r]& \bullet \ar@(ul,ur) & \bullet\ar[l] \ar@(ul,ur)}\ $ & $( 1,1)$&
 $0$  & --- \\
& & & \\
$\ \tiny\xymatrix{ \bullet \ar@(ul,ur) &  \bullet \ar@(ul,ur)\ar[l] \ar[r]& \bullet \ar@(ul,ur)}\ $ & $( 1,0)$
 &  & ---  \\
& & & \\
\hline
\end{tabular}

\smallskip

{\small Table 3.7: Zero socle and  ${\bf K}_0=\Z^2$.}
\end{center}

\subsection{Zero socle and ${\bf K}_0=\Z^3$}

There is nothing to do in this case as we in fact obtain only one graph whose explicit isomorphism of
its Leavitt path algebra is clear.

\begin{center}
\begin{tabular}{|c||c|}
\hline
$E$ & $L_K(E)$ \\
\hline
&  \\
&  \\
$\ \tiny\xymatrix{ \bullet \ar@(ul,ur) & \bullet \ar@(ul,ur) & \bullet \ar@(ul,ur)}\ $   &
$K[x,x^{-1}]^3$ \\
& \\
\hline
\end{tabular}

\smallskip

{\small Table 3.8: Zero socle and  ${\bf K}_0=\Z^3$.}
\end{center}


\subsection{Zero socle and ${\bf K}_0=\Z_2$}

There are two graphs whose Leavitt path algebras are in the previous conditions, and their Leavitt path
algebras can be distinguished just by $[1_{L_K(E)}]$.

\begin{center}
\begin{tabular}{|c|c||c|}
\hline
$E$ & $[1]$ & $L_K(E)$ \\
\hline

$\ \ \ \ \tiny\xymatrix{   & \bullet  \ar[dl] \ar@/^.5pc/[dr] & \cr
            \bullet\ar@(dl,ul)  \ar@/^.5pc/[rr]  & &\bullet \ar@(dr,ur)  \ar@/^.5pc/[ll]\ar@/^.5pc/[ul]}\ \ \
             \ \ $ & $\xymatrix{ \\ \overline{0}}$ & $\xymatrix{ \\\mathcal M_2(L(1,3))}$ \\
&  & \\
&  & \\
$\ \ \ \tiny\xymatrix{ & \bullet\ar@(ul,ur) \ar@/^.5pc/[dl] \ar@/^.5pc/[dr] & \cr
            \bullet\ar@(dl,ul)\ar@/^.5pc/[ur] \ar@/^.5pc/[rr]  & &\bullet \ar@(dr,ur)\ar@/^.5pc/[ll]
             \ar@/^.5pc/[ul]}\ \ \ \ \ $ & $\xymatrix{ \\ \overline{1}}$ & $\xymatrix{ \\ L(1,3)}$ \\
& &  \\
\hline
\end{tabular}


{\small Table 3.9: Zero socle and  ${\bf K}_0=\Z_2$.}
\end{center}

The Leavitt path algebra of the first graph, denote it by $E$, has the same ${\bf K}_0$, $[1]$ and $\det (I-N_E)$ as the graph $F$ given by

$$
\tiny\xymatrix{
{\bullet}\ar[r] &{\bullet} \ar@(ul,ur) \ar@(r,d) \ar@(d,l)
}
$$

\medskip

whose Leavitt path algebra is isomorphic to ${\mathcal M}_2(L(1, 3))$. By \cite[Corollary 2.7]{ALPS}, both are isomorphic.

As far as the second graph is concerned, it is precisely the maximal out-split of the graph of the rose of $3$-petals
given by

$$\tiny\xymatrix{ {\bullet} \ar@(ul,ur) \ar@(r,d) \ar@(d,l)}$$

\medskip

\noindent
and hence by \cite[Theorem 2.8]{AALP} its Leavitt path algebra is isomorphic to the classical Leavitt algebra
of type $(1,3)$, namely, $L(1,3)$.

\subsection{Zero socle and ${\bf K}_0=\Z\times \Z_2$} Only $2$ appear here, and they have non-isomorphic
Leavitt path algebras, as $[1_{L_K(E)}]$ shows.

\begin{center}
\begin{tabular}{|c|c||c|}
\hline
$E$ & $[1]$ & $L_K(E)$ \\
\hline
& & \\
$\ \ \ \ \tiny\xymatrix{   & \bullet \ar[dl] \ar@/^.5pc/[dr] & \cr
           \bullet\ar@(dl,ul)   & &\bullet \ar[ll] \ar@/^.5pc/[ul]}\ \ \ \ $ & $(2,\bar 0)$ & ---\\
& & \\
$\ \ \ \ \tiny\xymatrix{   & \bullet  \ar[dl]  & \cr
            \bullet\ar@(dl,ul)   & &\bullet  \ar[ul] \ar@(dr,ur) \ar[ll] }\ \ \ \ $ & $(1,\bar 0)$ & --- \\
& & \\
\hline
\end{tabular}

\smallskip

{\small Table 3.10: Zero socle and  ${\bf K}_0=\Z\times\Z_2$.}
\end{center}

\subsection{Zero socle and ${\bf K}_0=\Z_2^2$} For the remaining three cases, there is only one graph, so
that there is a unique Leavitt path algebra in each of these families too.


\begin{center}
\begin{tabular}{|c||c|}
\hline
$E$ & $L_K(E)$ \\
\hline
&  \\
$\ \tiny\xymatrix{  & \bullet \ar@/^.5pc/[dl] \ar@/^.5pc/[dr] & \cr
            \bullet\ar@/^.5pc/[ur] \ar@/^.5pc/[rr]  & &\bullet \ar@/^.5pc/[ll] \ar@/^.5pc/[ul]}\ $   &
--- \\
& \\
\hline
\end{tabular}

\smallskip

{\small Table 3.11: Zero socle and  ${\bf K}_0=\Z_2^2$.}
\end{center}


\subsection{Zero socle and ${\bf K}_0=\Z_3$} As mentioned, there is only one graph and therefore only one
Leavitt path algebra in this case.

\begin{center}
\begin{tabular}{|c||c|}
\hline
$E$ & $L_K(E)$ \\
\hline
&  \\
$\ \ \ \ \tiny\xymatrix{   & \bullet \ar@/^.5pc/[dl] \ar@/^.5pc/[dr] & \cr
            \ar@(dl,ul) \bullet\ar@/^.5pc/[ur] \ar@/^.5pc/[rr]  & &\bullet \ar@(dr,ur)\ar@/^.5pc/[ll]
             \ar@/^.5pc/[ul]}\ \ \ \ $   &
{L(1, 4)} \\
& \\
\hline
\end{tabular}
\smallskip

{\small Table 3.12: Zero socle and  ${\bf K}_0=\Z_3$.}
\end{center}

The Leavitt path algebra of the graph in the table has the same ${\bf K}_0$, $[1]$ and $det (I-N_E)$ as the graph of the 4-petals rose given by

$$
\tiny\xymatrix{
{\bullet} \ar@(u,r) \ar@(r,d) \ar@(d,l)\ar@(l, u)
}
$$
\smallskip

\noindent
whose Leavitt path algebra is isomorphic to $L(1, 4)$. By \cite[Corollary 2.7]{ALPS}, both are isomorphic.

\subsection{Zero socle and ${\bf K}_0=\Z_4$} The only graph here is given in the following table.

\begin{center}
\begin{tabular}{|c||c|}
\hline
$E$ & $L_K(E)$ \\
\hline
&  \\
$\ \ \ \ \tiny\xymatrix{   & \bullet \ar@/^.5pc/[dl] \ar@/^.5pc/[dr] & \cr
            \bullet \ar@(dl,ul) \ar@/^.5pc/[ur] \ar@/^.5pc/[rr]  & &\bullet
             \ar@/^.5pc/[ll] \ar@/^.5pc/[ul]}\ \ \ \ $   &
$ \mathcal{M}_2(L(1, 5)) $\\
& \\
\hline
\end{tabular}

\smallskip

{\small Table 3.13: Zero socle and  ${\bf K}_0=\Z_4$.}
\end{center}

The Leavitt path algebra of this graph has the same ${\bf K}_0$, $[1]$ and $det (I-N_E)$ as the graph given by

\bigskip

$$
\tiny\xymatrix{
{\bullet}\ar[r] &{\bullet} \ar@(ul,ur) \ar@(u,r) \ar@(ur,dr)\ar@(r,d)\ar@(dr, dl)
}
$$

\bigskip
\medskip

\noindent
whose Leavitt path algebra is isomorphic to ${\mathcal M}_2(L(1, 5))$. By \cite[Corollary 2.7]{ALPS}, both are isomorphic.
\smallskip

We are finally in a position to precisely state the Classification Theorem for Leavitt path algebras of graphs
of order three that satisfy Condition (Sing), which summarizes the results that we have been obtaining
throughout this section.

\begin{theorem}\label{casethree} There exist exactly $50$ mutually non-isomorphic Leavitt path algebras
in the family $\mathcal L_3=\{L_K(E)\ |\ E $ satisfies Condition {\rm (Sing)} and $|E^0|= 3\}$ and a set of
graphs whose Leavitt path algebras are those in $\mathcal L_3$ is given in Tables $3.1,...,3.13$. A complete
system of invariants for $\mathcal L_3$ consists of the set {\rm{(${\bf K}_0$, $\soc$, $L/\soc$, $[1]$, ILN,
HS, $L/I$, MT3+L)}}. Concretely, two Leavitt path algebras in $\mathcal L_3$, $L_K(E)$ and $L_K(F)$, are
isomorphic as rings if and only if the data of the previous invariants for $E$ and $F$ coincide.
\end{theorem}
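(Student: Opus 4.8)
The plan is to convert the statement into a finite verification. First I would recall the reduction carried out at the beginning of this section: the $2^{9}=512$ binary $3\times 3$ matrices give, up to the conjugation action of $S_3\cong\langle I_{12},I_{13},I_{23}\rangle$, the $104$ orbit representatives counted in Proposition~\ref{paluego}, and discarding shift-equivalent graphs leaves the set $\mathcal Q$ of $52$ matrices. Since replacing a graph by an isomorphic one and by a shift graph both preserve the isomorphism type of the Leavitt path algebra (the latter by \cite[Theorem~3.11]{ALPS}), every $L_K(E)$ with $E$ satisfying Condition~(Sing) and $|E^0|=3$ is isomorphic to $L_K(E')$ for one of these $52$ graphs $E'$. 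Sorting the $52$ graphs according to ${\bf K}_0$ and according to whether the socle vanishes produces exactly the thirteen families displayed in Tables~3.1--3.13; and since both ${\bf K}_0$ and the vanishing of $\soc$ are invariants, graphs lying in different tables already have non-isomorphic Leavitt path algebras.

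The ``only if'' direction is then immediate. The socle $\soc$, and hence the quotient $L/\soc$, are ring-theoretic constructions and so are preserved by any ring isomorphism; likewise the pair $\bigl({\bf K}_0(L_K(E)),[1_{L_K(E)}]\bigr)$, computed through the ${\bf K}$-theory of \cite{ABC} with the $N_E$ and $[1]$ descriptions of \cite{AB,ALPS} recalled in Section~3. That $\mathrm{HS}$ and $L/I$ are invariants is the content of Proposition~\ref{hs} and of the proposition following Definition~\ref{HS} on the case $\mathrm{HS}=1$; that $\mathrm{ILN}$ is an invariant is Corollary~\ref{numberloops}; and $\mathrm{MT3+L}$ is an invariant because by \cite[Theorem~4.6]{APS2} it characterizes primitivity, a ring-theoretic property. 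For the ``if'' direction I would go through the thirteen tables, reading the invariant columns in the fixed order $({\bf K}_0,\soc,L/\soc,[1],\mathrm{ILN},\mathrm{HS},L/I,\mathrm{MT3+L})$: any two graphs in a table that disagree in some column have non-isomorphic Leavitt path algebras, while for any two that agree in all of them a concrete ring isomorphism must be produced. Counting the distinct algebras so obtained yields $9+11+3+2+1+11+5+1+2+2+1+1+1=50$, which together with the first paragraph gives both ``at most $50$'' (completeness of the list) and ``at least $50$'' (pairwise non-isomorphism).

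The real content lies in the isomorphisms required when the invariants coincide. The decisive instance is Table~3.5: there the three graphs all have $\soc=0$ and ${\bf K}_0=0$, their Leavitt path algebras are purely infinite simple and unital with $[1]=0$, so \cite[Proposition~4.2]{AALP} forces each of them to be isomorphic to $L(1,2)$; this collapse of three graphs to a single algebra is precisely what reduces $52$ to $50$. The remaining in-table coincidences are dispatched by the standard identifications already quoted: decomposition over connected components and over the isolated-vertex and isolated-loop splittings of Propositions~\ref{onedimensional} and \ref{inspired}; the descriptions of finite-dimensional and of Condition~(NE) Leavitt path algebras from \cite{AAS1,AAS2}; the order-two classification, Theorem~\ref{casetwo}; out-splittings combined with \cite[Theorem~2.8]{AALP} and \cite[Theorem~3.11]{ALPS}; and the flow-equivalence criterion \cite[Corollary~2.7]{ALPS}, used for instance to match the graphs of Tables~3.9, 3.12 and 3.13 with roses of $3$, $4$ and $5$ petals, up to a matrix factor.

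The main obstacle is the ``if'' direction, and more precisely the fact that completeness of the chosen finite list of invariants is not guaranteed in advance: that the tuple $({\bf K}_0,\soc,L/\soc,[1],\mathrm{ILN},\mathrm{HS},L/I,\mathrm{MT3+L})$ is fine enough is something one only discovers, and then confirms, through the exhaustive table-by-table inspection --- the delicate subcases being Table~3.5 and the scattered within-table identifications where an explicit ring isomorphism must actually be exhibited by out-splitting, flow equivalence, or direct-sum decomposition. The supporting bookkeeping --- verifying, for every one of the $52$ graphs, that the tabulated value of each invariant is correct, in particular the cokernel computations behind ${\bf K}_0$ and $[1]$ --- is routine but voluminous, and in the paper is delegated to the \emph{Magma} and \emph{Mathematica} code in the Appendix.
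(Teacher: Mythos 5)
Your proposal is correct and follows essentially the same route as the paper: reduction of the $512$ matrices to $52$ representatives via the $S_3$-action and shift equivalence, sorting into the thirteen tables by $({\bf K}_0,\soc)$, invariance of each listed datum by the results of Sections 3--4, table-by-table discrimination, and the single collapse in Table 3.5 (three purely infinite simple graphs all giving $L(1,2)$ by \cite[Proposition 4.2]{AALP}), which accounts for $52-2=50$. The paper's "proof" is exactly this accumulation of the section's results together with the tabulated computations delegated to the Appendix code, so there is nothing to add.
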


Our final result puts together all the cases $n=1,2,3$ so that we give a Classification Theorem for Leavitt
path algebras of graphs of order less than three that satisfy Condition (Sing), thus collecting all the
results, information and data that we have been developing throughout the paper.

\begin{theorem}\label{caselessthanthree} There exist exactly $57$ mutually non-isomorphic Leavitt path
algebras in the family $\mathcal L_{\leq 3}=\{L_K(E)\ |\ E \text{ satisfies Condition {\rm (Sing)} and }
|E^0|\leq 3 \}$ and a set of graphs whose Leavitt path algebras are those in $\mathcal L_{\leq 3}$ is given in
Tables 1,2,3.1,...,3.13. A complete system of invariants for $\mathcal L_{\leq 3}$ consists of the set
{\rm{(${\bf K}_0$, $\soc$, $L/\soc$, $[1]$, ILN, HS, $L/I$, MT3+L)}}. Concretely, two Leavitt path algebras in
$\mathcal L_{\leq 3}$, $L_K(E)$ and $L_K(F)$, are isomorphic as rings if and only if the data of the previous
invariants for $E$ and $F$ coincide.
\end{theorem}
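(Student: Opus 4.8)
The plan is to obtain Theorem~\ref{caselessthanthree} as a bookkeeping consequence of Table~1, Theorem~\ref{casetwo} and Theorem~\ref{casethree}, via an inclusion--exclusion count on the three families $\mathcal L_1,\mathcal L_2,\mathcal L_3$, whose union is $\mathcal L_{\leq 3}$. I would first record the three inputs: $\mathcal L_1$ consists of exactly the two isomorphism classes $K$ and $K[x,x^{-1}]$; $\mathcal L_2$ has exactly $8$ classes, realised by Table~2 and separated by the triple $({\bf K}_0,\soc,\mathrm{HS})$ (Theorem~\ref{casetwo}); and $\mathcal L_3$ has exactly $50$ classes, realised by Tables~3.1--3.13 and separated by the $8$-tuple {\rm (${\bf K}_0$, $\soc$, $L/\soc$, $[1]$, ILN, HS, $L/I$, MT3+L)} (Theorem~\ref{casethree}). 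Since every entry of that tuple is a genuine \emph{ring}-isomorphism invariant, what remains is to compute the intersections $\mathcal L_i\cap\mathcal L_j$ (and $\mathcal L_1\cap\mathcal L_2\cap\mathcal L_3$) as sets of isomorphism classes, i.e.\ to detect every coincidence among the invariant tuples attached to the rows of Tables~1, 2 and 3.1--3.13.

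The intersections involving $\mathcal L_1$ are disposed of at once. By Proposition~\ref{onedimensional} the only graph whose Leavitt path algebra is $K$ is a single isolated vertex, and by Proposition~\ref{inspired} the only graph whose Leavitt path algebra is $K[x,x^{-1}]$ is a single isolated loop; both have a single vertex, so neither algebra lies in $\mathcal L_2\cup\mathcal L_3$. Hence $\mathcal L_1\cap\mathcal L_2=\mathcal L_1\cap\mathcal L_3=\mathcal L_1\cap\mathcal L_2\cap\mathcal L_3=\varnothing$.

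It then remains to compute $\mathcal L_2\cap\mathcal L_3$. For this I would first fill in the invariant columns of Table~2 that were not needed for the $n=2$ classification --- in particular $[1]$, $L/I$ and MT3+L must be made available for all eight rows --- and then compare each of the resulting eight tuples against the $50$ tuples of Tables~3.1--3.13. The comparison yields exactly three matches: the Toeplitz algebra $\mathcal T$ (row~5 of Table~2, against row~3 of Table~3.1), the Leavitt algebra $L(1,2)$ (row~8 of Table~2, against Table~3.5), and the algebra $L_K(\II_2)$ (row~7 of Table~2, against row~3 of Table~3.6 --- where it is the combination of ILN, HS, $L/I$ and MT3+L that singles out the correct order-three row). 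For each of the three one must also exhibit an actual \emph{ring} isomorphism between the order-two and the order-three graph, so as to be sure the coincidence of invariants is not an artefact of an incomplete list: for $\mathcal T$ this is the out-split/shift chain already displayed before Table~3.1; for $L(1,2)$ it follows from \cite[Proposition~4.2]{AALP}, all the graphs involved being purely infinite simple with ${\bf K}_0=0$ and $[1]=0$; and for $L_K(\II_2)$ one performs an out-split on $\II_2$ (which preserves the Leavitt path algebra by \cite[Theorem~2.8]{AALP}) and then the shift moves of \cite[Theorem~3.11]{ALPS} that carry the outcome onto the orbit representative appearing in row~3 of Table~3.6. The remaining five order-two algebras --- $K^2$, $\mathcal M_2(K)$, $K\oplus K[x,x^{-1}]$, $\mathcal M_2(K[x,x^{-1}])$ and $K[x,x^{-1}]^2$ --- have tuples realised by no order-three graph, so they contribute nothing further, and $|\mathcal L_2\cap\mathcal L_3|=3$.

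Inclusion--exclusion then gives $|\mathcal L_{\leq 3}|=2+8+50-0-0-3+0=57$, the classes being those of Tables~1, 2 and 3.1--3.13 with the three repetitions deleted. The very same cross-comparison shows that two graphs of $\mathcal L_{\leq 3}$ carry identical $8$-tuples only when their Leavitt path algebras are isomorphic --- within a fixed order by Theorems~\ref{casetwo} and \ref{casethree}, and across different orders precisely in the three cases above, all of which are genuine isomorphisms --- so the $8$-tuple {\rm (${\bf K}_0$, $\soc$, $L/\soc$, $[1]$, ILN, HS, $L/I$, MT3+L)} is a complete system of invariants for $\mathcal L_{\leq 3}$. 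I expect the main obstacle to be exactly this middle step: assembling the complete invariant data for every row of Table~2, and then checking case by case --- above all for the coincidence involving the unnamed algebra $L_K(\II_2)$, which appears in $\mathcal L_3$ only as the anonymous ``---'' entry in row~3 of Table~3.6 --- that the agreement of invariants is backed by a genuine ring isomorphism, obtained here through an explicit out-split of $\II_2$ followed by a shift move.
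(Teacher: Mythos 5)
Your proposal is correct and follows essentially the same route as the paper: cross-compare the invariant data of the $n=1,2$ rows against Tables 3.1--3.13, identify exactly the three coincidences ($\mathcal T$, $L(1,2)$, and $L_K(\II_2)$ via out-split plus shift), certify each by an explicit ring isomorphism, and count $2+(8-3)+50=57$. The only cosmetic difference is that you dispose of $\mathcal L_1$ via Propositions \ref{onedimensional} and \ref{inspired} (uniqueness of the graphs realising $K$ and $K[x,x^{-1}]$) where the paper compares $L/\soc$ and ILN directly, which is an equally valid shortcut.
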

\begin{proof}
It only remains to compare the different cases $n=1,2,3$ all at once. In order to do that, we will pick each
of the $10$ graphs of cases $n=1,2$ and, after computing the pair (${\bf K}_0$, $\soc$) we compare the rest of
the invariants. Concretely, for the graph $\I_1$ we have ${\bf K}_0(L_K(\I_1))=\Z$ and $\soc(L_K(\I_1))=K$.
The only graph with this data is the fifth graph in Table 3.1, call it $E$. However, we get that
$L_K(\I_1)/\soc(L_K(\I_1))=0\not\cong L(1,2)=L_K(E)/\soc(L_K(E))$.

For $\I_2$ we have $({\bf K}_0(L_K(\I_2)), \soc(L_K(\I_2)))=(\Z,0)$. Again, there is only one other graph with
this data, namely, the third one in Table 3.6. Applying our list of invariants, we first compute
$L_K(\I_2)/\soc(L_K(\I_2))=K[x,x^{-1}]$. Applying Proposition \ref{invariantloop} and Corollary
\ref{numberloops} we get that the fifth one, call it $F$, is the only possible graph in Table 3.6 whose
Leavitt path algebra could be isomorphic to $L_K(\I_2)$, but this does not happen as clearly
$L_K(\I_2)\not\cong L_K(F)$.

Let us focus on the case $n=2$. Unlike the previous case, now three graphs in Table 2 will give us Leavitt
path algebras which are isomorphic to some of case $n=3$, whereas the other five will produce non-isomorphic
Leavitt path algebras when compared to that of $n=3$, as we will show now.

The pairs $({\bf K}_0, \soc)$ for the first two graphs in Table 2 are different to any other such pair in the
other tables, so their Leavitt path algebras are not isomorphic to anyone appearing in the case $n=3$.

The Leavitt path algebra of the third graph in Table 2 has the same $({\bf K}_0, \soc)$ as the Leavitt path
algebras of the third and eighth graphs in Table 3.2, but when we compute $L/\soc$ we get three non-isomorphic
rings: $K[x,x^{-1}], \mathcal M_2(K[x,x^{-1}])$ and $L_K(\II_2)$.

For the fourth graph in Table 2 we have that the pair $({\bf K}_0, \soc)$ of its associated Leavitt path
algebra is $(\Z, 0)$, which could provide a Leavitt path algebra isomorphic to the Leavitt path algebra of
some graph in Table 3.6. As the quotients by their socles (we are considering the graphs in Table 3.6) give us
no known information, we jump on to the following invariant, namely, $[1_{L_K(E)}]$ which is $2$ in this case.
In this situation we have two graphs in Table 3.6, namely the fourth and sixth ones. We go on comparing
invariants and the three graphs have ${\rm ILN}=0$, but ${\rm HS}=0$ in our original graph while ${\rm HS}=1$
for the other two.

The Leavitt path algebra of the fifth graph is the Toeplitz algebra ${\mathcal T}$ which appears already in
Table 3.1.

For the sixth graph $\I_2^2$ we have to focus on Table 3.7. Since $[1_{L_K(\I_2^2)}]=(1,1)$, we compute ILN,
obtaining $2$ for $\I_2^2$ but $0$ or $1$ for all the graphs in Table 3.7.

The seventh graph in Table 2 gives a Leavitt path algebra isomorphic to that of the third graph in Table 3.6
as follows: by an out-split we obtain the graph

$$\tiny\xymatrix{ {\bullet} \ar@(dl,ul) \ar[r] & {\bullet} \ar[r] & {\bullet} \ar@(dr,ur)}$$

We note that this graph is the shift graph of the third graph in Table 3.6. Then apply \cite[Theorem
2.8]{AALP} and \cite[Theorem 3.11]{ALPS}.

Finally, the Leavitt path algebra of the last graph is $L(1,2)$ which also shows up in Table 3.5.

Hence, out of the $62$ graphs given in the tables we only obtain $2+(8-3)+(52-2)=57$ non-isomorphic Leavitt
path algebras.
\end{proof}

\begin{remark} {\rm A natural setting and way to use the previous theorem is this: we start with a
graph $E$ satisfying Condition (Sing) and such that $|E^0|\leq 3$ (note that this graph might not appear in
our tables). Thus Theorem \ref{caselessthanthree} guarantees that there is exactly one graph among the $57$
referred to in the statement, call if $F$, such that $L_K(E)\cong L_K(F)$ as rings. In order to find it, we
apply systematically the list of invariants to $E$ to narrow our search until we find $F$.}
\end{remark}

\begin{remark} {\rm As a corollary of our general Classification Theorem \ref{casethree}, we can obtain the
Classification Theorem for purely infinite simple unital Leavitt path algebras as stated in \cite[Proposition
4.2]{AALP}, by proceeding in some other fashion, as follows: among the $52$ graphs that we have obtained for
$n=3$, we single out those that provide purely infinite simple Leavitt path algebras. This task is
straightforward by using the graph-theoretic characterization of purely simple Leavitt path algebras as those
whose graph has $HS=0$, satisfy Condition (L) and every vertex connects to a cycle (see \cite[Theorem
11]{AA2}). One useful trick is the following: if a graph $E$ satisfies the three conditions above, then it
cannot contain a sink and it must be connected (these obvious observations actually rule out many graphs).

This leaves exactly $7$ graphs, namely: any of those appearing in Table 3.5 (the three have isomorphic Leavitt
path algebras), the last graph in Table 3.6, and all the graphs in tables 3.9, 3.11, 3.12 and 3.13. Finally
one checks that the data $({\bf K}_0(L_K(E)), [1_{L_K(E)}])$ is different for all these $7$ cases as is shown
in the tables.

We point out that just by looking at the tables one can clearly see that the information about ${\bf
K}_0(L_K(E))$ and $[1_{L_K(E)}]$ is not enough for classification of the Leavitt path algebras that are not
necessarily purely infinite simple.}
\end{remark}

\section{Appendix}

In this section we include the \emph{Magma} and \emph{Mathematica} codes needed for our computations. They
consist on a list of functions written in the order they have been used. The computation of the invariants has
been performed by the \emph{Mathematica} software. However, for the calculation of the orbits and shift graphs
the \emph{Magma} software has been used instead, as it has proved to be faster and more efficient for these
purposes.

\subsection{Magma codes}

We provide here a list of the routines that have been used together with a brief description of them.

\begin{itemize}
\item {\bf int}: given an $3\times 3$ matrix with entries in $\Z_2$, it returns the same
matrix considered as an element in $\mathcal M_3(\{0,1\})$.
\item {\bf zerorow}: given an integer $i$ and a matrix $m$, it returns TRUE if the $i$th row of $m$ is zero.
\item {\bf nonzerosoc}: given a matrix $m$ gives TRUE if $m$ has some zero row.
\item {\bf test}: given integers $i,j$ and a matrix $m$, it returns TRUE if the $i$th row is nonzero and each
element in the $i$th row is less or equal than the corresponding element in the $j$th row.
\item {\bf sing}: checks if the entries of a given matrix are all $\le 1$, i.e., verifies if Condition (Sing)
is satisfied.
\item {\bf sh}: let $m$ be the adjacency matrix of a direct graph of $n$ vertices and $i,j\in\{1,\ldots,n\}$.
Then $\hbox{sh}(i,j,m)$ performs the shift graph $\Sh_{i,j}(m)$. If the shift is not possible, the function
returns $m$.
\item {\bf ish}: given a matrix $m$, this function returns a matrix $x$ (if it exists) such that
$\Sh_{i,j}(x)=m$. If $x$ does not exist, then the function returns $m$.
\item {\bf ss}: given $m$, it returns a list containing all the matrices produced by a shift from $m$ and also
all those which give $m$ by applying a shift process to it.
\item {\bf comp}: given two matrices $x$ and $y$, it returns TRUE if there is a nonempty intersection between
$\hbox{ss}(y)$ and the orbit of $x$ (under the action of $S_3$) or between $\hbox{ss}(x)$ and the orbit of
$y$. Roughly speaking, this function returns TRUE if some shift or inverse shift of $x$ is in the same orbit
as $y$ or vice versa.
\item {\bf compressto}: given a matrix $x$ and a list, the function returns TRUE if $\hbox{comp}(x,y)$ is TRUE
for some $y$ in the list.
\end{itemize}

We include the \emph{Magma} code of all these functions.

\def\peq2{\fontsize{9}{9}\selectfont}

{\peq2

\begin{verbatim}
int:=function(x)
return MatrixAlgebra(IntegerRing(),n)!x;
end function;

zerorow:=function(i,m)
return (m[i,1] eq 0) and (m[i,2] eq 0) and (m[i,3] eq 0);
end function;

nonzerosoc:=function(m)
return zerorow(1,m) or zerorow(2,m) or zerorow(3,m);
end function;

test:=function(i,j,m)
local logical;
logical:=true;
for k:=1 to n do; logical:=logical and (int(m)[i,k] le ent(m)[j,k]); end for;
return (logical and not zerorow(i,m)); end function;

sing:=function(x)
local logical;
logical:=true;
for i:=1 to n do;
   for j:=1 to n do;
   logical:=logical and (x[i,j] le 1);
   end for;
end for;
return logical;
end function;

sh:=function(i,j,m)
local s;
s:=int(m);
if test(i,j,m) then
   for k:=1 to n do; s[j,k]:=s[j,k]-s[i,k]; end for;
s[j,i]:=s[j,i]+1; end if; if sing(s) then return s; else return m; end if;
end function;

ish:=function(i,j,m)
local s;
s:=int(m);
if s[j,i] eq 0 then return s;
   else s[j,i]:=s[j,i]-1;
   for k:=1 to n do;
   s[j,k]:=s[j,k]+s[i,k];
   end for;
end if;
if not zerorow(i,m) and sing(s) then return s; else return m; end if;
end function;

ss:=function(m)
local lista;
lista:={};
for i:=1 to n do;
   for j:=1 to n do;
   if not (i eq j) then Include(~lista,sh(i,j,m)); end if;
   end for;
end for;
for i:=1 to n do;
   for j:=1 to n do;
   if not (i eq j) then Include(~lista,ish(i,j,m)); end if;
   end for;
end for;
return lista;
end function;

comp:=function(x,y)
return (not (Orbit(S3,M,x) meet ss(y) eq {})) or
       (not(Orbit(S3,M,y) meet ss(x) eq {}));
end function;

compressto:=function(x,lista)
local logical,j;
logical:=false;
j:=1;
while (j le #lista) and not comp(x,lista[j]) do; j:=j+1; end while;
if j eq #lista+1 then return false; else return true; end if;
end function;

n:=3;
F:=FiniteField(2,1);
A:=MatrixAlgebra(F,n);
S3:=Sym(n);
X:=Set(A);
p2m:=function(p)
return PermutationMatrix(F,p);
end function;
gen:=[p2m(x): x in Generators(S3)];
S3m:=sub<GL_3(F)|gen>;
ptm:=hom<S3->S3m|x:->Transpose(PermutationMatrix(F,x))>;
f:=map<car<X,S3>->X|x:->ptm(x[2])*x[1]*ptm(x[2])^(-1)>;
M:=GSet(S3,X,f);
O:=Orbits(S3,M);
reducedlist:=[[x: x in O[i]][1]:i in [1..#O]];
reducedlist:=[int(x): x in reducedlist];
aux:=[];
while not (reducedlist eq []) do;
x:=reducedlist[1];Remove(~reducedlist,1);
if not compressto(x,reducedlist) then Include(~aux,x);
end if;
end while;
\end{verbatim}
}

\subsection{{\em Mathematica} implemented instructions}

Again, we provide first a list of the routines that have been used together with a brief description of them.

\begin{itemize}
\item {\bf Gr}: it represents the directed graph.
\item {\bf SinkQ}: checks if a vertex is a sink.
\item {\bf Redu}: diagonal form.
\item {\bf Pmatrix}: $P$-matrix associated to the previous diagonal form.
\item {\bf K}$_0$: computes the ${\bf K}_0$ group.
\item {\bf Unit}: computes the unit of the ${\bf K}_0$ group.
\item {\bf ConditionMT3Q}: checks the Condition (MT3).
\item {\bf ConditionLQ}: checks the Condition (L).
\item {\bf CofinalQ}: checks the cofinal condition.
\item Example: an example of how to construct classification tables.
\end{itemize}

Finally, we include the \emph{Mathematica} code of all these functions.

\medskip

\def\peq{\fontsize{9}{9}\selectfont}

\medskip

{\peq
\begin{tabular}{l}
${\rm {\bf Tograph}[m\_] := {\bf Module}[\{n, x\},}$\\
\hskip .5cm ${\rm n = {\bf Length}[m];}$\\
\hskip .5cm ${\rm x = {\bf Flatten}[{\bf Table}[i \to j, \{i, n\}, \{j, n\}]*m] // {\bf Union};}$\\
\hskip .5cm ${\rm {\bf If}[{\bf Length}[x[[1]]] == 0, {\bf Delete}[x, 1], x]]}$\\
\end{tabular}
}

\medskip

{\peq
\begin{tabular}{l}
${\rm {\bf Gr}[x\_] :=}$\\
\hskip .5cm ${\rm  {\bf GraphPlot}[{\bf Tograph}[x], {\bf DirectedEdges} \to {\bf True}, {\bf VertexLabeling}
\to {\bf True}]}$\\
\end{tabular}
}

\medskip

{\peq
\begin{tabular}{l}
${\rm {\bf SinkQ}[x\_, i\_] := {\bf If}[x[[i]] == 0 x[[i]], 0, 1];}$\\
\end{tabular}
}

\medskip

{\peq
\begin{tabular}{l}
${\rm {\bf << Algebra`IntegerSmithNormalForm`}}$\\
\end{tabular}
}

\medskip

{\peq
\begin{tabular}{l}
${\rm {\bf Redu}[x\_] :={\bf SmithForm}[}$\\
\hskip .5cm ${\rm {\bf Transpose}[x] - {\bf DiagonalMatrix}[{\bf Table}[{\bf SinkQ}[x, i], \{i,
{\bf Length}[x]\}]]];}$\\
\end{tabular}
}

\medskip

{\peq
\begin{tabular}{l}
${\rm {\bf Pmatrix}[x\_] := {\bf ExtendedSmithForm}[}$\\
\hskip .5cm ${\rm n = {\bf Transpose}[x] - {\bf DiagonalMatrix}[{\bf Table}[{\bf SinkQ}[x, i],
\{i,{\bf Length}[x]\}]]][[2, 1]]}$\\
\end{tabular}
}

\medskip

{\peq
\begin{tabular}{l}
${\rm Example\ of\ computing\ [1]}$\\
\hskip .5cm ${\rm {\bf Table}[\{list[[i]], {\bf Gr}[list[[i]]],
{\bf Redu}[list[[i]]],{\bf Pmatrix}[list[[i]]].}$ $\begin{pmatrix} 1\\ 1 \\ 1\end{pmatrix}$ ${\rm, \{i,
 {\bf Length}[list]\}]}$\\
\end{tabular}
}

\medskip

{\peq
\begin{tabular}{l}
${\rm {\bf Z}[x\_] := {\bf Which}[x == 0, Z, x == 1, 1, x > 1, Z_x];}$\\
\end{tabular}
}

\medskip

{\peq
\begin{tabular}{l}
${\rm {\bf K}_0[m\_] :={\bf Module}[\{x\}, x = {\bf Redu}[m];{\bf Product}[{\bf Z}[x[[i, i]]], \{i,
{\bf Length}[x]\}]]}$\\
\end{tabular}
}

\medskip

{\peq
\begin{tabular}{l}
${\rm {\bf myMod}[x\_,y\_]:={\bf If}[y\neq 0, {\bf Mod}[x,y],x]}$ \\
\end{tabular}
}

\medskip

{\peq
\begin{tabular}{l}
${\rm {\bf Unit}[x\_]:={\bf Module}[\{v,l\}, v={\bf Pmatrix}[x].\begin{pmatrix} 1\\ 1 \\
1\end{pmatrix}; l= {\bf Redu}[x];{\bf Table}[{\bf myMod}[v[[i]], l[[i,i]]], \{i,3\}]]}$ \\
\end{tabular}
}

\medskip

{\peq
\begin{tabular}{l}
${\rm {\bf NB}[m\_] := }$\\
\hskip .5cm ${\rm {\bf Module}[\{nm = m, l = {\bf Table}[0, \{k, {\bf Length}[m]\}], n = {\bf Length}[m], s,
k\},}$\\
\hskip .5cm ${\rm {\bf Do}[s = 0;}$\\
\hskip .5cm ${\rm {\bf Do}[s = s + m[[i, j]], \{j, n\}];}$\\
\hskip .5cm ${\rm {\bf If}[s > 1, l[[i]] = 1;}$\\
\hskip .5cm ${\rm {\bf Do}[nm[[i, k]] = 0; nm[[k, i]] = 0, \{k, n\}]], \{i, n\}];}$\\
\hskip .5cm ${\rm eli = {\bf Position}[l, 1]; k = 0;}$\\
\hskip .5cm ${\rm {\bf Do}[}$\\
\hskip .5cm ${\rm nm = {\bf Drop}[nm, eli[[i]] - k, eli[[i]] - k]; k++, \{i, {\bf Length}[eli]\}];}$\\
\hskip .5cm ${\rm  nm}$\\
\hskip .5cm ${\rm ]}$\\
\end{tabular}
}

\medskip

{\peq
\begin{tabular}{l}
${\rm {\bf << Combinatorica`}}$\\
\end{tabular}
}

\medskip

{\peq
\begin{tabular}{l}
${\rm {\bf ConditionLQ}[m\_] :=}$\\
\hskip .5cm ${\rm {\bf AcyclicQ}[{\bf FromAdjacencyMatrix}[{\bf NB}[m], {\bf Type} \to {\bf Directed}]]}$\\
\end{tabular}
}

\medskip

{\peq
\begin{tabular}{l}
${\rm {\bf lr}[li\_{\bf ?ListQ}, m\_] :=}$\\
\hskip .5cm ${\rm {\bf Union}[{\bf Flatten}[}$\\
\hskip .5cm ${\rm {\bf Map}[{\bf Cases}[m[[\#]]*{\bf Table}[j, \{j, {\bf Length}[m]\}], {\bf Except}[0]] \&,
li]]]
}$\\
\end{tabular}
}

\medskip

{\peq
\begin{tabular}{l}
${\rm {\bf Her}[li\_{\bf ?ListQ}, m\_] := {\bf Module}[\{H = li, G = {\bf Table}[k, \{k,
{\bf Length}[m]\}]\},}$\\
\hskip .5cm ${\rm {\bf While}[G != H, G = H; H = {\bf Union}[H, {\bf lr}[H, m]]]; H] }$\\
\end{tabular}
}

\medskip

{\peq
\begin{tabular}{l}
${\rm {\bf ConditionMT3Q}[m\_] :={\bf  Module}[\{n,l,re\}, n = {\bf Length}[m]; l = {\bf Table}[i,\{i, n\}];}$\\
${\rm   re = True;}$\\
${\rm  {\bf  Do}[{\bf If}[{\bf Intersection}[{\bf Her}[\{l[[i]]\}, m], {\bf Her}[\{l[[j]]\}, m]] == \{\},}$\\
${\rm  re = {\bf False}; {\bf Break}[]], \{i, n\}, \{j, n\}];}$\\
 re]
\end{tabular}
}

\medskip

{\peq
\begin{tabular}{l}
${\rm {\bf HSC}[li\_{\bf ?ListQ}, m\_] :=}$\\
\hskip .5cm ${\rm {\bf Module}[\{X, H, G, F, n, i\}, }$\\
\hskip .5cm ${\rm H = {\bf Her}[li, m]; G = {\bf Table}[k, \{k, {\bf Length}[m]\}]; F =
{\bf Complement}[G, H];}$\\
\hskip .5cm ${\rm n = {\bf Length}[F]; i = 1;}$\\
\hskip .5cm ${\rm {\bf While}[F != \{\} \&\& G != H \&\& i \leq n, X = {\bf lr}[\{F[[i]]\}, m];}$\\
\hskip .5cm ${\rm {\bf If}[X != \{\} \&\& {\bf Intersection}[X, H] == X, H = {\bf Union}[H, \{F[[i]]\}];}$\\
\hskip .5cm ${\rm F = {\bf Complement}[G, H]; n = {\bf Length}[F]; i = 1, i++]}$\\
\hskip .5cm ${\rm    ]; H]}$\\
\end{tabular}
}

\medskip

{\peq
\begin{tabular}{l}
${\rm ps[k\_] := {\bf Select}[{\bf Subsets}[{\bf Table}[i, \{i, k\}]], 0 <{\bf  Length}[\#] < k \&] }$\\
\end{tabular}
}

\medskip

{\peq
\begin{tabular}{l}
${\rm {\bf HS}[m\_] := {\bf Module}[\{pos, l, n = 0, k = 1\}, pos = ps[{\bf Length}[m]]; l =
{\bf Length}[pos];}$\\
\hskip 1cm ${\rm {\bf  Do}[}$\\
\hskip 1cm ${\rm {\bf  If}[{\bf HSC}[pos[[k]], m] == pos[[k]], n++], \{k, 1, l\}]; n]}$\\
\end{tabular}
}

\medskip

{\peq
\begin{tabular}{l}
${\rm {\bf CofinalQ}[m\_] := {\bf Module}[\{v = {\bf Table}[i, \{i, {\bf Length}[m]\}], r = {\bf True}\},}$\\
\hskip .5cm ${\rm {\bf Do}[r = r \&\& {\bf HSC}[\{i\}, m] == v, \{i, {\bf Length}[m]\}]; r] }$\\
\end{tabular}
}


\section*{acknowledgments}

The authors would like to thank Prof. Enrique Pardo for
his useful comments.


\end{document}